\pgfplotsset{compat=1.18}
\newtheorem{theorem}{Theorem}
\newtheorem*{theorem*}{Theorem}
\newtheorem{proposition}[theorem]{Proposition}
\newtheorem{lemma}[theorem]{Lemma}
\newtheorem{conjecture}[theorem]{Conjecture}
\newtheorem*{conjecture*}{Conjecture}
\newtheorem*{question*}{Question}
\theoremstyle{remark}
\newtheorem{remark}[theorem]{Remark}
\theoremstyle{definition}
\newtheorem*{shearer}{Shearer's Theorem}
\newtheorem*{mainthmredux}{Theorem~\ref{thm:main} redux}
\newtheorem*{mainconjredux}{Conjecture~\ref{conj:occfrac} redux}
\newcommand{\icount}{i} 
\newcommand{\ratioexp}{\varphi}
\title{Triangle-free graphs with the fewest independent sets}
\author{Pjotr Buys}
\address{Korteweg--de Vries Institute for Mathematics, University of Amsterdam, Netherlands.}
\email{\protect\href{mailto:pjotr.buys@gmail.com}{\protect\nolinkurl{pjotr.buys@gmail.com}}}
\author{Jan van den Heuvel}
\address{Department of Mathematics, London School of Economics \& Political Science, London, UK.}
\email{\protect\href{mailto:j.van-den-heuvel@lse.ac.uk}{\protect\nolinkurl{j.van-den-heuvel@lse.ac.uk}}}
\author{Ross J. Kang}
\address{Korteweg--de Vries Institute for Mathematics, University of Amsterdam, Netherlands. } 
\email{\protect\href{mailto:r.kang@uva.nl}{\protect\nolinkurl{r.kang@uva.nl}}}
\begin{document}

\begin{abstract}
Given $d>0$ and a positive integer $n$, let $G$ be a triangle-free graph on $n$ vertices with average degree $d$.
With an elegant induction, Shearer (1983) tightened a seminal result of Ajtai, Koml\'os and Szemer\'edi (1980/1981) by proving that $G$ contains an independent set of size at least $(1+o(1))\frac{\log d}{d}n$ as $d\to\infty$.

By a generalisation of Shearer's method, we prove that the number of independent sets in $G$ must be at least $\exp\left((1+o(1))\frac{(\log d)^2}{2d}n\right)$ as $d\to\infty$.
This improves upon results of Cooper and Mubayi (2014) and Davies, Jenssen, Perkins, and Roberts (2018).
Our method also provides good lower bounds on the independence polynomial of $G$, one of which implies Shearer's result itself.
As certified by a classic probabilistic construction, our bound on the number of independent sets is sharp to several leading terms as $d\to\infty$.
\end{abstract}


\date{13 March 2025}
\maketitle

\section{Introduction}\label{sec:intro}

\noindent
Given a graph $G=(V,E)$, an \emph{independent set} is a subset of $V$ that spans no edge of $G$ and a \emph{triangle} is a three-vertex subset that spans three edges of $G$.
For $v\in V$, we write~$N(v)$ for the set of vertices neighbouring $v$ and $\deg(v)=|N(v)|$ for the degree of $v$.
We denote the average degree $\frac{1}{|V|}\sum_{v \in V} \deg(v)$ of $G$ by $d(G)$ (where we treat this as $0$ if $G$ is empty).
We study the collection $\mathcal{I}(G)$ of all independent sets of $G$ (including the empty set).

We address the following basic question:
\begin{quote}
\emph{What triangle-free $G=(V,E)$ minimises $|\mathcal{I}(G)|$ in terms of $|V|$ and $d(G)$?}
\end{quote}
In a meaningful and precise sense, we show that if we draw $G$ as a binomial random graph with expected average degree $d$, then with positive probability we essentially obtain the minimiser.

By Tur\'an's theorem~\cite{Tur41},  the cardinality $\alpha(G)$ of a largest element in $\mathcal{I}(G)$, also known as the \emph{independence number} of $G$, satisfies $\alpha(G)\ge \frac{1}{d(G)+1}|V|$, and this is exact for $G$ some disjoint union of complete graphs.
On the other hand, it is known that forbidding some fixed graph $H$ as a subgraph of $G$ ensures that $G$ contains some significantly larger independent set (see~\cite{AEKS81,She95}).
Due to its implications for the off-diagonal Ramsey numbers (see~\cite{BoKe21,FGM20}) and its central role in the study of the structure of locally sparse graphs (see~\cite{DaKa25+,DKPS20+}), the case $H=K_3$ the triangle is of prime importance.

Seminal work of Ajtai, Koml\'os and Szemer\'edi~\cite{AKS80,AKS81} showed that  $\alpha(G) = \Omega\left(\frac{\log d(G)}{d(G)}|V|\right)$ for any triangle-free $G$.
Soon after, in an acclaimed work, Shearer~\cite{She83} sharpened this as follows.
Note that in this paper a logarithm is always the natural logarithm (with base ${e}$).

\begin{shearer}[\cite{She83}]\label{thm:shearer}\mbox{}\\*
\textit{If $G=(V,E)$ is a  triangle-free graph of average degree $d$, then $\alpha(G) \ge (1+o(1)) \dfrac{\log d}{d}|V|$ as $d\to \infty$.}
\end{shearer}

\noindent
A standard analysis of the binomial random graph shows this bound to be sharp up to a $2+o(1)$ multiplicative factor (see\ e.g.~\cite{JLR00}; see also Remark~\ref{rem:shearersharpness}).
Despite sustained attention  through the decades --- in part because it incidentally yields the best-known upper bound for the Ramsey numbers $R(3,k)$ --- Shearer's bound remains the state of the art. 

Interest has grown in another basic property of $\mathcal{I}(G)$, namely its cardinality $\icount(G)=|\mathcal{I}(G)|$.
Most notably, interesting \emph{upper} bounds on $\icount(G)$ were motivated by questions in combinatorial number theory (see~\cite{Alo91,Cam87,Kah01,Zha10}).
Surprisingly, systematic investigation into lower bounds on $\icount(G)$ has begun in earnest only relatively recently~\cite{CoMu14,CuRa14}, eventually helping to precipitate some remarkable insights into $\alpha(G)$ and more, especially through the partition function and elegant properties of the hard-core model (see~\cite{DJPR17,DJPR18,DKPS20+}).

It was observed in~\cite{CoMu14} (see also~\cite{CuRa14,SSSZ19}) that the number of independent sets of any graph~$G$ must satisfy $\log( \icount(G)) =  \Omega\left(\frac{\log d(G)}{d(G)}|V|\right)$, and this is sharp up to the leading constant, again with $G$ as some disjoint union of complete graphs.
Cooper and Mubayi~\cite{CoMu14} also showed that, mirroring the situation for $\alpha(G)$, there is a substantially better guarantee on $\icount(G)$ if $G$ is triangle-free; $\log(\icount(G)) =  \Omega\left(\frac{(\log d(G))^2}{d(G)}|V|\right)$ in that case.
Later, with Dutta~\cite{CDM14}, they proved that Shearer's Theorem itself may be used to show $\log(\icount(G)) \ge (1+o(1))\frac{(\log d(G))^2}{4d(G)}|V|$, where the $o(1)$ term is as $d(G)\to\infty$.
Improving on both this bound and another one due to Davies, Jenssen, Perkins, and Roberts~\cite[Thm.~2]{DJPR18}, our main result is as follows.

\begin{theorem}\label{thm:main}\mbox{}\\*
If $G=(V,E)$ is a  triangle-free graph of average degree $d$, then $\log( \icount(G)) \ge  (1+o(1))  \dfrac{(\log d)^2}{2d}|V|$ as $d\to \infty$.
\end{theorem}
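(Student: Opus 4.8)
The plan is to generalise Shearer's induction from the independence number to the logarithm of the independence polynomial $Z_G(\lambda):=\sum_{I\in\mathcal{I}(G)}\lambda^{|I|}$, and then to specialise to $\lambda=1$, where $Z_G(1)=\icount(G)$. The elementary tools are the deletion identity $Z_G(\lambda)=Z_{G-v}(\lambda)+\lambda\,Z_{G-N[v]}(\lambda)$, valid for every $v\in V$ (here $N[v]=N(v)\cup\{v\}$), and its corollary $Z_{G-w}(\lambda)\le Z_G(\lambda)\le(1+\lambda)Z_{G-w}(\lambda)$. First I would fix a function $\varphi$ on $(0,\infty)\times[0,\infty)$, decreasing and convex in its second coordinate, with $\varphi(\lambda,0)=\log(1+\lambda)$ and $\varphi(0,\,\cdot\,)\equiv 0$, and prove by induction on $|V|$ that every triangle-free $G$ satisfies
\[
  \log Z_G(\lambda)\ \ge\ \sum_{v\in V}\varphi\bigl(\lambda,\deg_G(v)\bigr)\qquad\text{for every }\lambda>0 .
\]
Convexity of $\varphi(\lambda,\,\cdot\,)$ together with Jensen's inequality then upgrades this to $\log Z_G(\lambda)\ge|V|\,\varphi(\lambda,d)$ with $d=d(G)$, mirroring Shearer's passage from a degree-sequence bound to an average-degree bound.

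For the induction, the case where $G$ has an isolated vertex $v$ is settled by convexity alone: $\log Z_G(\lambda)=\log(1+\lambda)+\log Z_{G-v}(\lambda)$, the average degree can only increase on deleting $v$, and $\varphi(\lambda,0)=\log(1+\lambda)$, so the bound for $G-v$ plus convexity (writing the new average degree as a convex combination involving the deleted vertex) gives the bound for $G$. In the main case one takes $v$ of minimum degree $\delta\ge1$ with neighbours $u_1,\dots,u_\delta$ of degrees $d_1,\dots,d_\delta\ge\delta$, and uses triangle-freeness: $N(v)$ is independent, so in $G-v$ each $u_i$ loses exactly one edge and no other degree changes, while in $G-N[v]$ all degrees only decrease. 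Substituting the inductive bounds for $Z_{G-v}$ and $Z_{G-N[v]}$ into the deletion identity — with $A:=\sum_{w\notin N[v]}\varphi(\lambda,\deg_G w)$ the common ``bulk'' term — yields $Z_G(\lambda)\ge e^{A}\bigl(e^{\sum_i\varphi(\lambda,d_i-1)}+\lambda\bigr)$, so the step closes once $\varphi$ satisfies
\[
  e^{\sum_{i=1}^{\delta}(\varphi(\lambda,d_i-1)-\varphi(\lambda,d_i))}+\lambda\,e^{-\sum_{i=1}^{\delta}\varphi(\lambda,d_i)}\ \ge\ e^{\varphi(\lambda,\delta)}\qquad(\delta\ge1,\ d_i\ge\delta).
\]
This single-vertex recursion is, however, too weak on its own — optimised, it yields only $\log\icount(G)=\Omega(|V|/d)$, weaker than the trivial $\log\icount(G)\ge\alpha(G)\log2$ — because the estimate $Z_{G-N[v]}(\lambda)\ge e^A$ throws away the degree reductions that deleting $N(v)$ inflicts on the surviving vertices. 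The genuine argument must strengthen it: decompose $Z_{G-v}(\lambda)$ according to $S:=I\cap N(v)$ (each of the $2^\delta$ choices legal because $N(v)$ is independent), and apply the inductive hypothesis to each residual triangle-free graph, namely $G-N[v]$ with the neighbours of $S$ outside $N[v]$ also deleted, while carefully tracking the resulting degree reductions among the surviving vertices. The harmonic-sum behaviour hidden in this refined recursion — the same mechanism that produces the $\log d$ in Shearer's function — supplies the extra logarithmic factor, at the cost of a substantially more delicate functional inequality for $\varphi$.

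With the right $\varphi$ in hand, the rest is asymptotics. The transparent route is the occupancy-fraction identity $\log Z_G(\lambda)=|V|\int_0^\lambda\alpha_G(t)\,\tfrac{dt}{t}$, where $\alpha_G(t)$ is the hard-core occupancy fraction at fugacity $t$: the inequality constructed above is equivalent to a bound $\alpha_G(t)\ge(1-o(1))\tfrac{(1-a)\log d}{d}$ with $a=\log(1/t)/\log d$, valid for $t\in[1/d,1]$ (the range $t<1/d$ contributing only $O(1/d)$), so that
\[
  \frac{\log\icount(G)}{|V|}=\int_0^1\alpha_G(t)\,\frac{dt}{t}\ \ge\ (1-o(1))\,\frac{(\log d)^2}{d}\int_0^1(1-a)\,da\ =\ (1-o(1))\,\frac{(\log d)^2}{2d},
\]
the constant $\tfrac12$ entering precisely as $\int_0^1(1-a)\,da$; equivalently $\varphi(1,d)\sim\tfrac1d\int_0^d f(t)\,dt\sim\tfrac{(\log d)^2}{2d}$, where $f(t)=\tfrac{t\log t-t+1}{(t-1)^2}$ is Shearer's function. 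The same family of inequalities recovers Shearer's Theorem on letting $\lambda\to\infty$: from $\log Z_G(\lambda)\ge|V|\varphi(\lambda,d)$, the bound $\log Z_G(\lambda)\le|V|\log2+\alpha(G)\log\lambda$, and $\varphi(\lambda,d)/\log\lambda\to f(d)$, one obtains $\alpha(G)\ge(1+o(1))|V|f(d)$. Evaluating the master inequality at $\lambda=1$ gives the theorem.

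I expect the main obstacle to be twofold. First, pinning down the correct strengthened recursion — the right way to use triangle-freeness throughout the neighbourhood of $v$ and to account for the degree drops in the second neighbourhood — and the correct $\varphi$, and then verifying the resulting functional inequality for \emph{all} $\lambda>0$ and \emph{all} admissible degree sequences simultaneously; the worst-case reduction over the $d_i$ and the convexity of $\varphi(\lambda,\,\cdot\,)$ needed for the Jensen step are exactly the places where a suboptimal choice of $\varphi$ would leak the factor $2$ that separates our bound from the $\tfrac14$ of Cooper, Dutta and Mubayi. Second, the asymptotic analysis delivering the constant $\tfrac12$ is not a one-scale Shearer-type estimate but an integration over the whole range of fugacities $t\in(0,1]$, with $o(1)$ error terms to be controlled uniformly as $d\to\infty$.
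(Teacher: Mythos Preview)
Your proposal has a real gap at the inductive core. You correctly diagnose that picking a minimum-degree vertex $v$ and using only $Z_{G-N[v]}(\lambda)\ge e^{A}$ discards the degree drops in the second neighbourhood, but your proposed repair---decomposing $Z_{G-v}$ over subsets $S\subseteq N(v)$---is never carried out: you write neither the resulting functional inequality nor the function $\varphi$ that is supposed to satisfy it, and it is far from clear that this route can reach the sharp constant. The paper's device is different and much simpler: rather than a minimum-degree vertex, pick a \emph{uniformly random} vertex $\mathbf v$, apply the basic identity $Z_G=Z_{G-\mathbf v}+\lambda Z_{G-N[\mathbf v]}$ once, bound each piece by induction and convexity of a function $f_\lambda$ of the \emph{average degree}, and then take the expectation over $\mathbf v$. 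Triangle-freeness and Cauchy--Schwarz give $\mathbb E\bigl[m-m_{\mathbf v}^{\mathrm{in}}\bigr]=\tfrac1n\sum_{u}\deg(u)^2\ge d^{2}$, which captures \emph{in expectation} exactly the second-neighbourhood degree reductions you were trying to track combinatorially. The whole induction then collapses to the single differential inequality
\[
e^{-d f_\lambda'(d)-f_\lambda(d)}+\lambda\,e^{(d-d^{2})f_\lambda'(d)-(d+1)f_\lambda(d)}\ \ge\ 1,
\]
and the paper exhibits an explicit solution built from the Lambert~$W$ function (Lemma~\ref{lem:holygrail}). No subset decomposition, no degree-sequence bookkeeping; the basic deletion identity \emph{is} enough once one averages.

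Your asymptotic plan is also circular. You propose to obtain the constant $\tfrac12$ by integrating an occupancy-fraction lower bound $\alpha_G(t)\ge(1-o(1))\tfrac{(1-a)\log d}{d}$ over $t\in[1/d,1]$, asserting that ``the inequality constructed above'' delivers it. But no such inequality has been constructed, and for triangle-free graphs of bounded \emph{average} degree this occupancy bound is open even at $t=1$: it is precisely Conjecture~\ref{conj:occfrac} of the paper. The paper does not pass through occupancy fraction at all; the asymptotics $f_1(d)\sim\tfrac{(\log d)^2}{2d}$ follow directly from $W(d)=\log d-\log\log d+o(1)$ applied to the explicit $f_\lambda$.
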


\noindent
Davies et al.~\cite{DJPR18} leveraged local properties of the hard-core model to prove an analogous result involving the \emph{maximum degree $\Delta(G)$} rather than the average degree $d(G)$.
Moreover, they showed their bound on $\log(\icount(G))$ to be asymptotically sharp --- that is, the only slack is in the $o(1)$ term.
The same is thus true of the bound in Theorem~\ref{thm:main}. 

In fact, we establish Theorem~\ref{thm:main} not only in a more general sense --- that is, for the independence polynomial of $G$, which we define shortly --- but also in a more precise one.
In Section~\ref{sec:sharpness}, we spell out an analysis of the random graph to show our bound is sharp to several terms in addition to the leading asymptotic term.
Here is the more precise restatement.

Note that the \emph{Lambert~$W$ function}~$W$ (also called the \emph{product logarithm}) is the inverse of the function $\mathbb{R}_{\geq 0} \to \mathbb{R}_{\geq 0}$ given by $x \mapsto xe^{x}$.
It is real analytic on $\mathbb{R}_{\geq 0}$, and satisfies $W(x) = \log x - \log\log x + O\bigl((\log\log x)/\log x\bigr) = \log x - \log\log x + o(1)$ as $x\to\infty$.

\begin{mainthmredux}\mbox{}\\*
\textit{Let $\mathcal{F}_1: \mathbb{R}_{\geq 0}\to \mathbb{R}_{\geq 0}$ be the function defined by
\[\mathcal{F}_1(d) = \inf\left\{\frac{\log(\icount(G))}{|V|} : \text{$G=(V,E)$ a triangle-free graph of average degree at most $d$}\right\}.\]
Then for $d\geq 0$ we have
\[\frac{W(d)^2+2W(d)-W(2)^2-2W(2)}{2(d - 2)} \le \mathcal{F}_1(d) \le \frac{W(d)^2+2W(d)}{2d}.\]}
\end{mainthmredux}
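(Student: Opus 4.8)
The plan is to split the statement into its two independent halves. The lower bound on $\mathcal{F}_1$ is the substance --- it is Theorem~\ref{thm:main} in quantitatively sharp form, which I would obtain by generalising Shearer's inductive argument from the independence number to a lower bound on the independence polynomial --- while the upper bound on $\mathcal{F}_1$ is the matching sharpness statement, for which I would extract a triangle-free graph from the binomial random graph (the analysis promised for Section~\ref{sec:sharpness}). Throughout write $h(d)=\tfrac12\bigl(W(d)^2+2W(d)\bigr)$; then $h'(d)=W(d)/d$, $h(0)=0$, the map $h$ is increasing and concave on $\mathbb{R}_{\ge0}$, and $d\mapsto h(d)/d$ is decreasing and convex. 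The two target expressions are precisely the chord slopes of $h$: from $0$ to $d$ for the upper bound $h(d)/d$, and from $2$ to $d$ for the lower bound.

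\emph{Upper bound.} I would take $G$ to be the binomial random graph $\mathbb{G}(n,p)$ with $p$ of the form $(d-o(1))/n$, modified to be triangle-free. A first-moment estimate gives $\mathbb{E}\,\icount\bigl(\mathbb{G}(n,p)\bigr)=\sum_k\binom nk(1-p)^{\binom k2}$; writing $k=xn$ and using $\log\binom nk=n(-x\log x+x)+o(n)$ together with $(1-p)^{\binom k2}=e^{-dx^2n/2+o(n)}$, the exponent $-x\log x+x-\tfrac12 dx^2$ is maximised at $x=e^{-W(d)}=W(d)/d$ with maximum value exactly $h(d)/d$, so $\tfrac1n\log\mathbb{E}\,\icount(\mathbb{G}(n,p))\to h(d)/d$. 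Since the expected number of triangles is $O(d^3)=O(1)$, Markov's inequality produces a single instance $G_0$ with $\icount(G_0)\le e^{(h(d)/d+o(1))n}$ carrying only $O(d^3)$ triangles; deleting one vertex from each triangle leaves a triangle-free $G$ with $|V(G)|=n-O(d^3)$, with average degree at most $d$ (after a suitable choice of the lower-order term in $p$), and with $\icount(G)\le\icount(G_0)$, because deleting vertices cannot increase the number of independent sets. Letting $n\to\infty$ and using continuity of $x\mapsto h(x)/x$ gives $\mathcal{F}_1(d)\le h(d)/d$.

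\emph{Lower bound.} I would prove, by induction on $|V|$, a Shearer-type bound on the independence polynomial $Z_G(\lambda)=\sum_{I\in\mathcal{I}(G)}\lambda^{|I|}$, say $\log Z_G(\lambda)\ge\sum_{v\in V}g_\lambda(\deg v)$ for a suitable decreasing convex $g_\lambda\colon\mathbb{R}_{\ge0}\to\mathbb{R}_{\ge0}$, and then specialise to $\lambda=1$ (a companion bound, or the limit $\lambda\to\infty$, would return Shearer's theorem itself). The induction runs through the identity $Z_G(\lambda)=Z_{G-v}(\lambda)+\lambda\,Z_{G-v-N(v)}(\lambda)$ applied to a vertex $v$ of minimum degree: as $G$ is triangle-free, $N(v)$ is independent, so the drop in degrees incurred by passing to $G-v-N(v)$ is governed by the degrees of the neighbours of $v$ --- precisely the feature Shearer exploits through averaging --- and substituting the two inductive hypotheses into the recursion yields a functional inequality that $g_\lambda$ must satisfy, of which one takes the largest solution. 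Convexity of $g_\lambda$ then allows Jensen's inequality to turn $\sum_v g_\lambda(\deg v)$ into $|V|\,g_\lambda(d(G))$, giving $\mathcal{F}_1(d)\ge g_1(d)$. Finally, the (differential) inequality governing $g_1$ is solved in terms of the Lambert-$W$ function, the relevant antiderivative being $h$ (recall $h'(d)=W(d)/d$), and when its integration constant is fixed by the base of the induction one is left with $g_1(d)\ge\dfrac{h(d)-h(2)}{d-2}=\dfrac{W(d)^2+2W(d)-W(2)^2-2W(2)}{2(d-2)}$, where the term $-h(2)$ and the shift to $d-2$ are exactly what that base condition contributes.

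\emph{Main obstacle.} The crux is making the inductive step close for \emph{every} triangle-free $G$ and \emph{every} degree sequence. Unlike the independence-number setting, whose recursion has a convenient $\max$-plus shape, here the two terms $Z_{G-v}$ and $\lambda Z_{G-v-N(v)}$ are \emph{summed}, so taking logarithms forces a weighted arithmetic-geometric mean estimate $\log(x+y)\ge\theta\log x+(1-\theta)\log y+H(\theta)$ (with $H$ the binary entropy) followed by an optimisation over $\theta\in(0,1)$; it is that optimisation, in interaction with $\deg v$, the neighbour degrees and the triangle-free structure, that is responsible for the appearance of the Lambert-$W$ function and the special role of the value $2$. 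Guessing the function $g_\lambda$ so that the ensuing functional inequality holds, and verifying the convexity needed for the Jensen reduction to the average degree, are the remaining technical points; checking that the resulting $\lambda=1$ bound does improve on the earlier estimates recalled in the introduction is then routine.
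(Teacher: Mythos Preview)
Your overall plan---a Shearer-type induction for the lower bound and the binomial random graph for the upper bound---matches the paper's. The upper bound is essentially the same; the paper avoids your vertex-deletion step by using that $G(n,d/n)$ is triangle-free with probability tending to $e^{-d^3/6}>0$ and intersecting that event with the Markov bound on $Z_G$, but your variant is fine too.

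The lower bound, however, differs in its mechanics, and the specific route you sketch has a gap. The paper does \emph{not} prove a per-vertex inequality $\log Z_G\ge\sum_v g_\lambda(\deg v)$ and then apply Jensen at the end; it inducts directly on the statement $\log Z_G(\lambda)\ge f_\lambda(d)\,|V|$ with $d$ the average degree. In the inductive step one picks a \emph{uniformly random} vertex $\mathbf v$ (not a vertex of minimum degree), linearises $f_\lambda(d_{\mathbf v}^{\mathrm{out}})$ and $f_\lambda(d_{\mathbf v}^{\mathrm{in}})$ around $f_\lambda(d)$ using convexity of $f_\lambda$, and then applies Jensen's inequality to the convex function $e^x$ \emph{over the random choice of $\mathbf v$}. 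Triangle-freeness enters through $\mathbb E[m-m_{\mathbf v}^{\mathrm{in}}]=\tfrac1n\sum_u\deg(u)^2\ge d^2$ (Cauchy--Schwarz). This reduces the induction to the single differential inequality
\[
\exp\bigl(-x f_\lambda'(x)-f_\lambda(x)\bigr)+\lambda\,\exp\bigl((x-x^2)f_\lambda'(x)-(x+1)f_\lambda(x)\bigr)\ \ge\ 1,
\]
which is then verified for $f_\lambda(x)=\dfrac{h(\lambda x)-h(2\lambda)}{x-2}$ (with your $h$). No entropy/AM-GM optimisation over a parameter $\theta$ is used, and the shift by $2$ together with the constant $h(2)$ arise not from ``the base of the induction'' but from removing the singularity of $f_\lambda$ at $x=2$ so that $f_\lambda$ is analytic, non-increasing and convex on all of $\mathbb R_{\ge0}$.

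The per-vertex formulation you propose leads instead to a \emph{local} functional inequality depending on $\deg v$ and the degrees in the first and second neighbourhoods individually; that is essentially the local-occupancy route of Davies--Jenssen--Perkins--Roberts, and the paper explicitly notes that integrating their bound already misses the correct second-order term. So it is not clear your scheme can recover the sharp chord $(h(d)-h(2))/(d-2)$. Moreover, ``minimum degree'' is not what Shearer does---his ``averaging'' is over the random choice of $v$ itself---and without that randomisation the Cauchy--Schwarz step that exploits triangle-freeness has no natural home in the argument.
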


\begin{figure}[ht]
    \centering
    \begin{tikzpicture}
        \begin{axis}[
            grid=major,
            legend pos=north east,
            width=15cm,
            height=5cm,
            xmin=0,
            xmax=100,
            ymin=0, 
            ymax=1,
            xtick={0,20,40,60,80,100},
            ytick={0,0.25,0.5,0.75,1}
        ]
            \addplot[smooth,
                line width=1pt,
                color = orange!50!magenta] table[x index=0, y index=2] {plot_data.dat};
            \addlegendentry{$\frac{W(d)^2+2W(d)}{2d}$}
            
            \addplot[smooth,
                line width=1pt,
                color=blue!50!cyan
                ] 
                table[x index=0, y index=1] {plot_data.dat};
            \addlegendentry{$\frac{W(d)^2+2W(d)-W(2)^2-2W(2)}{2(d - 2)}$}
            
        \end{axis}
    \end{tikzpicture}
    \caption{A comparison plot of the boundary curves for $\mathcal{F}_1(d)$ in Theorem~\ref{thm:main} redux.}
    \label{fig:comparisonplot}
\end{figure}
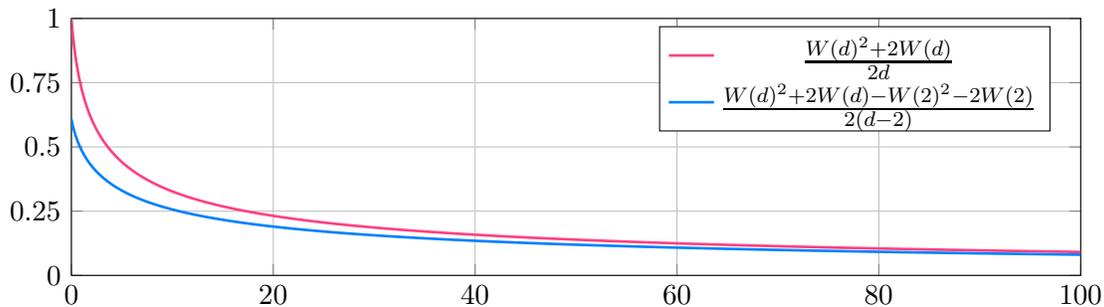

\medskip
\noindent
The difference between these upper and lower bounds on $\mathcal{F}_1(d)$ is $O(1/d)$; see Figure~\ref{fig:comparisonplot}.
Such sharpness is rare in the literature\footnote{A notable example is the work related to the Lower Matching Conjecture, solved by Csikv\'ari~\cite{Csi17}, about the fewest matchings of a given size in $d$-regular bipartite graphs. Although it is in a much more structured setting, this matches the random bipartite $d$-regular multigraphs to an even finer precision than what we see here.}.
To the best of our knowledge, there are no other Ramsey-type results in combinatorics matching a probabilistic construction to this precision.
It is unclear to us whether so precise a result could be found as corollary to an occupancy fraction bound as per \cite[Thm.~1]{DJPR18}.
Integration of that result (see \cite[Eq.~(3)]{DJPR18}) gives a bound, restricted to graphs of maximum degree $\Delta$, shy of sharpness already in the second-order term as $\Delta\to\infty$.

Our proof of Theorem~\ref{thm:main} is inspired by Shearer's original proof in~\cite{She83}, and we feel it shares some of the same grace.
Indeed, our proof also implies Shearer's Theorem (see Remark~\ref{rem:shearer}).
Shearer performed a strikingly simple induction on the number of vertices, finessed by the probabilistic method.
By observing how the average degree changes when removing some vertex or some closed neighbourhood, Shearer's proof eventually reduced the bound to the differential inequality
\begin{equation}\label{eq:shearer}
1+(x-x^2)f'(x)\ge (x+1)f(x).
\end{equation}
That is, he showed that, if $f$ is a non-increasing differentiable convex function that satisfies~\eqref{eq:shearer}, then $\alpha(G) \ge f(d)|V|$ for any triangle-free graph $G=(V,E)$ with average degree $d$~\cite[Thm.~1]{She83}.
It is thankfully rather simple to find such a function that satisfies~\eqref{eq:shearer} with equality.

However, we will shortly see in our case that, as we are dealing with $i(G)$ instead of $\alpha(G)$, the analogous approach yields something more complicated.
Although finding a good function here was certainly no foregone conclusion, requiring an inspired choice, we surmise there will be further interesting insights through the development of Shearer's ingenious method.

\subsection*{The hard-core model}

\mbox{}\\
Given a graph $G=(V,E)$ and $\lambda\ge0$, consider a random independent set $X$ of~$G$, where any $I\in \mathcal{I}(G)$ is chosen with probability proportional to $\lambda^{|I|}$, that is,
\[\Pr(X = I) = \frac{\lambda^{|I|}}{\sum_{J\in\mathcal{I}(G)} \lambda^{|J|}}.\]
This distribution is called the \emph{hard-core model at fugacity $\lambda$}, and the normalising factor
\[Z_G(\lambda) = \sum_{J\in\mathcal{I}(G)} \lambda^{|J|}\]
is called its \emph{partition function}.
This function is also known as the \emph{independence polynomial} of~$G$.
Note that the uniform case corresponds to $\lambda=1$ and that $\icount(G) = Z_G(1)$.

Writing the \emph{closed neighbourhood} $\{v\} \cup N(v)$ of $v$ as $N[v]$,
it is crucial to observe that, for any vertex $v\in V$, we have  $Z_G(\lambda) = Z_{G-v}(\lambda) + \lambda Z_{G-N[v]}(\lambda)$, which  for $\lambda=1$ corresponds to $\icount(G) = \icount(G - v) + \icount(G - N[v])$.

Our primary objective is to get a good handle on the following extremal parameter:
\[\mathcal{F}_\lambda(d) = \inf\left\{\frac{\log(Z_G(\lambda))}{|V|} : \text{$G=(V,E)$ a triangle-free graph of average degree at most $d$}\right\}.\]
As discussed above, the best-known upper bounds on the Ramsey numbers $R(3,k)$ are based on our understanding of another extremal parameter:
\[\widetilde{\mathcal{F}}_\infty(d) = \inf\left\{\frac{\alpha(G)}{|V|} : \text{$G=(V,E)$ a triangle-free graph of average degree at most $d$}\right\}.\]
Shearer's Theorem says that $\widetilde{\mathcal{F}}_\infty(d) \ge (1+o(1))  \frac{\log d}{d}$ as $d\to\infty$, or more generally that $\widetilde{\mathcal{F}}_\infty(d) \ge f(d)$ for any non-increasing differentiable convex function $f$ that satisfies~\eqref{eq:shearer}.

Let us briefly point out how the two extremal parameters are related.

\begin{lemma}\label{lem:largelambda}\mbox{}\\*
\textit{For $d\ge 0$ we have $\lim\limits_{\lambda\to\infty} \dfrac{\mathcal{F}_\lambda(d)}{\log\lambda}=\widetilde{\mathcal{F}}_\infty(d)$.
}\end{lemma}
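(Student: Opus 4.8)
The plan is to establish the two inequalities $\lim_{\lambda\to\infty}\mathcal{F}_\lambda(d)/\log\lambda \ge \widetilde{\mathcal{F}}_\infty(d)$ and $\le\widetilde{\mathcal{F}}_\infty(d)$ separately, using the elementary sandwiching of $Z_G(\lambda)$ by powers of $\lambda$. The starting point is the observation that for any graph $G=(V,E)$ and any $\lambda\ge 1$ we have
\[
\lambda^{\alpha(G)} \le Z_G(\lambda) = \sum_{I\in\mathcal{I}(G)}\lambda^{|I|} \le i(G)\,\lambda^{\alpha(G)} \le 2^{|V|}\lambda^{\alpha(G)},
\]
where the first inequality picks out a single maximum independent set as one term, and the last uses $i(G)\le 2^{|V|}$. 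Taking logarithms, dividing by $|V|\log\lambda$, and letting $\lambda\to\infty$ gives
\[
\frac{\alpha(G)}{|V|} \le \frac{\log Z_G(\lambda)}{|V|\log\lambda} \le \frac{\alpha(G)}{|V|} + \frac{\log 2}{\log\lambda}.
\]

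For the lower bound on the limit, fix $d\ge 0$ and let $\varepsilon>0$. Choose a triangle-free graph $G=(V,E)$ of average degree at most $d$ with $\alpha(G)/|V| < \widetilde{\mathcal{F}}_\infty(d)+\varepsilon$. Then from the right-hand inequality above, $\mathcal{F}_\lambda(d) \le \log Z_G(\lambda)/|V| \le (\widetilde{\mathcal{F}}_\infty(d)+\varepsilon)\log\lambda + \log 2$, so $\limsup_{\lambda\to\infty}\mathcal{F}_\lambda(d)/\log\lambda \le \widetilde{\mathcal{F}}_\infty(d)+\varepsilon$; letting $\varepsilon\to 0$ handles this direction. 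For the reverse inequality, note that the left-hand inequality above, applied to \emph{every} triangle-free $G$ of average degree at most $d$, gives $\log Z_G(\lambda)/(|V|\log\lambda) \ge \alpha(G)/|V| \ge \widetilde{\mathcal{F}}_\infty(d)$, hence $\mathcal{F}_\lambda(d)/\log\lambda \ge \widetilde{\mathcal{F}}_\infty(d)$ for all $\lambda>1$, and so $\liminf_{\lambda\to\infty}\mathcal{F}_\lambda(d)/\log\lambda \ge \widetilde{\mathcal{F}}_\infty(d)$. Combining the two bounds yields the claimed limit.

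The only genuine subtlety — and the step I would be most careful about — is the legitimacy of passing the infimum through the limit, i.e.\ ensuring the argument does not implicitly assume the infimum defining $\mathcal{F}_\lambda(d)$ or $\widetilde{\mathcal{F}}_\infty(d)$ is attained. The argument above sidesteps this: the lower bound on the limit uses only that $\mathcal{F}_\lambda(d)$ is an infimum over a class containing near-optimal witnesses for $\widetilde{\mathcal{F}}_\infty(d)$, and the upper bound on the limit uses the pointwise inequality $\log Z_G(\lambda)/\log\lambda \ge \alpha(G) \ge \widetilde{\mathcal{F}}_\infty(d)|V|$ valid for \emph{all} admissible $G$ simultaneously, so taking the infimum over $G$ is harmless. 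One should also record the trivial boundary behaviour: for $d$ such that the admissible class contains the empty graph (all $d\ge 0$), both sides are finite and the manipulations of logarithms are valid since $Z_G(\lambda)\ge 1>0$ and $\lambda>1$. No compactness, continuity in $\lambda$, or analyticity input is needed — the whole statement reduces to the two-line sandwich inequality plus a routine interchange of $\inf$ and $\lim$ that the sandwich makes rigorous.
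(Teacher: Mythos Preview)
Your proof is correct and follows essentially the same approach as the paper's: both directions rest on the elementary sandwich $\lambda^{\alpha(G)} \le Z_G(\lambda) \le i(G)\,\lambda^{\alpha(G)}$ (the paper phrases the upper estimate via $\lim_{\lambda\to\infty}\log p(\lambda)/\log\lambda = \deg p$, which is the same content), together with a near-optimal witness for $\widetilde{\mathcal{F}}_\infty(d)$. One cosmetic slip: the paragraph you label ``lower bound on the limit'' actually establishes the \emph{upper} bound $\limsup \le \widetilde{\mathcal{F}}_\infty(d)$, and conversely for the other paragraph.
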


\begin{proof}
For $\varepsilon >0$, let $G=(V,E)$ be a triangle-free graph of average degree at most $d$ so that $\alpha(G)/|V| \le \widetilde{\mathcal{F}}_\infty(d)+\varepsilon$.
A polynomial $p$ with a positive leading coefficient satisfies $\lim\limits_{\lambda \to \infty} \log (p(\lambda))/\log\lambda = \deg(p)$, and so
\[\lim_{\lambda\to\infty}\frac{\mathcal{F}_\lambda(d)}{\log \lambda} \le \lim_{\lambda\to\infty}\frac{\log (Z_G(\lambda))}{|V|\log \lambda} = \frac{\alpha(G)}{|V|} \le \widetilde{\mathcal{F}}_\lambda(d)+\varepsilon.\]
As $\varepsilon$ was chosen arbitrarily, this proves one inequality.
The opposite inequality follows easily from the fact that $Z_G(\lambda)\ge\lambda^{\alpha(G)}$ and so $\log(Z_G(\lambda)) \ge \alpha(G)\log \lambda$ for any $G$.
\end{proof}

\section{Proof}\label{sec:proof}

\noindent
We prove Theorem~\ref{thm:main} in the more general context of the partition function $Z_G(\lambda)$ for fixed $\lambda\in[0,1]$, as embodied by Theorem~\ref{thm:mainlambda} below.
The proof consists of an induction \emph{\`a la} Shearer (Lemma~\ref{lem:mainlambda}), followed by an analytic `exercise' (Lemma~\ref{lem:holygrail}).
The latter step uses that $\lambda\in[0,1]$.

\begin{lemma}\label{lem:mainlambda}\mbox{}\\*
    Let $\lambda \geq 0$, and let $f_\lambda : \mathbb{R}_{\geq 0} \to \mathbb{R}$ be a non-increasing differentiable convex function such that for all $x \in \mathbb{R}_{\geq 0}$ we have
    \begin{equation}
        \label{eq:lambdaineq}
        \exp(-x f_\lambda'(x) - f_\lambda(x)) + \lambda \cdot \exp((x-x^2)f_\lambda'(x) - (x+1)f_\lambda(x)) \geq 1.
    \end{equation}
    Then if $G=(V,E)$ is a  triangle-free graph of average degree $d$, $\log( Z_{G}(\lambda)) \ge f_\lambda(d)\cdot|V|$.
\end{lemma}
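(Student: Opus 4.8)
\textbf{Proof plan for Lemma~\ref{lem:mainlambda}.}
The plan is to follow Shearer's induction on $|V|$, tracking the partition function instead of the independence number, and to use the probabilistic method to pick a good vertex to delete. So suppose $f_\lambda$ is as in the statement and let $G=(V,E)$ be triangle-free with average degree $d$; I induct on $n=|V|$, the base case $n=0$ being trivial since $Z_G(\lambda)=1$ and $f_\lambda(0)|V|=0$. For the inductive step I fix the deletion identity $Z_G(\lambda) = Z_{G-v}(\lambda) + \lambda Z_{G-N[v]}(\lambda)$ and aim to choose $v$ so that both $G-v$ and $G-N[v]$ have controllably small average degree, then apply the inductive hypothesis to each. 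The reason one averages over $v$ rather than picking a single vertex is that for a single vertex the average degree of $G-N[v]$ could jump, whereas in expectation (over $v$ chosen uniformly, or according to a cleverly weighted distribution as in Shearer's argument) the losses cancel against the gains; this is exactly the mechanism that produces the differential inequality~\eqref{eq:shearer} in Shearer's original proof and will produce~\eqref{eq:lambdaineq} here.

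Concretely, I would first record the two quantities that govern the induction for each $v$: writing $n=|V|$ and letting $d_v$ be the degree of $v$, the graph $G-v$ has $n-1$ vertices and its sum of degrees drops by $2d_v$, while $G-N[v]$ has $n-1-d_v$ vertices and (using triangle-freeness, so that $N(v)$ is an independent set and each $u\in N(v)$ loses all its edges) loses $\sum_{u\in N[v]}d_u$ from the degree sum. The inductive hypothesis gives $\log Z_{G-v}(\lambda)\ge f_\lambda(d(G-v))(n-1)$ and $\log Z_{G-N[v]}(\lambda)\ge f_\lambda(d(G-N[v]))(n-1-d_v)$. Since $f_\lambda$ is non-increasing and convex, I would bound $f_\lambda$ at these perturbed average degrees from below using a first-order (tangent-line) estimate at $d$, i.e. $f_\lambda(d+\delta)\ge f_\lambda(d)+\delta f_\lambda'(d)$, turning the per-vertex contributions into affine expressions in $d_v$ and the local degree sums. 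Exponentiating, $Z_G(\lambda)\ge \exp(\cdots)+\lambda\exp(\cdots)$, and then averaging the bound over a suitable choice of $v$ and invoking convexity of the exponential (Jensen, in the direction $\mathbb{E}[e^{Y}]\ge e^{\mathbb{E}[Y]}$) should collapse everything to the inequality that $e^{-df_\lambda'(d)-f_\lambda(d)}+\lambda e^{(d-d^2)f_\lambda'(d)-(d+1)f_\lambda(d)}\ge 1$ — but this is precisely hypothesis~\eqref{eq:lambdaineq} evaluated at $x=d$, so $\log Z_G(\lambda)\ge f_\lambda(d)\,n$ follows, completing the induction.

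The main obstacle, and the place where care is genuinely needed, is the bookkeeping of how the \emph{average} degree transforms under the two deletions and how to weight the choice of $v$ so that the messy sums $\sum_{u\in N[v]}d_u$ average out cleanly. Shearer's insight was that one should not delete a uniformly random vertex but rather use a probability distribution on $V$ (and a complementary chance of deleting nothing / a single vertex) tuned so that the expectations of the two exponents become exactly $-xf'(x)-f(x)$ and $(x-x^2)f'(x)-(x+1)f(x)$ at $x=d$; replicating that weighting in the present setting is the crux. A secondary subtlety is handling the regime where $n-1-d_v$ could be small or where deleting $N[v]$ removes most of the graph — one must check the affine/convexity estimates still go through (they do, because $f_\lambda$ is defined and convex on all of $\mathbb{R}_{\geq 0}$ and $f_\lambda(d(G-N[v]))(n-1-d_v)\ge 0$-type bounds can be replaced by the tangent estimate uniformly), and that the edge cases (isolated vertices, $d$ small) are absorbed. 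Once the averaging is set up correctly the reduction to~\eqref{eq:lambdaineq} is essentially forced; the analytic problem of actually exhibiting an $f_\lambda$ satisfying~\eqref{eq:lambdaineq} is deferred to Lemma~\ref{lem:holygrail} and is not part of this lemma.
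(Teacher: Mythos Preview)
Your plan is essentially the paper's proof: induction on $|V|$, the deletion identity, the tangent-line bound from convexity of $f_\lambda$, averaging over $v$, and Jensen for the exponential to reduce to~\eqref{eq:lambdaineq}. One correction, though: contrary to what you write, no clever weighting of $v$ is needed --- both Shearer's original argument and the paper's pick $v$ \emph{uniformly} at random. The ``messy'' term $\mathbb{E}[m - m(G-N[v])]$ then equals $\frac{1}{n}\sum_{u}\deg(u)^2$ (this is where triangle-freeness is actually used, to avoid double-counting edges inside $N(v)$), and Cauchy--Schwarz gives $\frac{1}{n}\sum_u \deg(u)^2 \ge d^2$; combined with $f_\lambda'(d)\le 0$ this is exactly what produces the exponent $(d-d^2)f_\lambda'(d)-(d+1)f_\lambda(d)$. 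So the ``crux'' you flag is resolved by Cauchy--Schwarz, not by a tuned distribution on $V$.
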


\noindent
Stated differently, Lemma~\ref{lem:mainlambda} asserts that $\mathcal{F}_\lambda(d) \ge f_\lambda(d)$ for any non-increasing differentiable convex function $f_\lambda$ that satisfies~\eqref{eq:lambdaineq}.

The reader is invited to ponder the differential inequality of Lemma~\ref{lem:mainlambda} and compare it to that of~\eqref{eq:shearer} from Shearer's proof.
Even for $\lambda=1$, solving~\eqref{eq:lambdaineq} with equality seems rather tricky.

The induction below is directly analogous to Shearer's, except that the parameter of consideration is number of independent sets, rather than size of a largest independent set.

\begin{proof}[Proof of Lemma~\ref{lem:mainlambda}]
Take $n=|V|$.
For $n = 0$ the statement reduces to $\log( Z_{K_0}(\lambda))\ge 0$, i.e.\ $Z_{K_0}(\lambda) \geq 1$, where $K_0$ denotes the empty graph, which is indeed correct. 

Now we assume that $n \geq 1$ and that the statement holds for graphs with strictly less than~$n$ vertices.
Let $G=(V,E)$ be a graph on $n$ vertices and average degree $d$.
Let $v$ be an arbitrary vertex of $G$.
We use the following shorthand notation:
$G_v^{\mathrm{out}}$ denotes $G-v$ and~$G_v^{\mathrm{in}}$ denotes $G-N[v]$.
And we use $n_v^{\mathrm{out}}, n_v^{\mathrm{in}}$, $m_v^{\mathrm{out}}, m_v^{\mathrm{in}}$, $d_v^{\mathrm{out}}$, and $d_v^{\mathrm{in}}$ for the number of vertices, number of edges, and the average degrees of the graphs $G_v^{\mathrm{out}}$ and $G_v^{\mathrm{in}}$, respectively.
So we have $n_v^{\sigma} d_v^\sigma = 2 m_v^{\sigma}$ for $\sigma \in \{\mathrm{out},\mathrm{in}\}$.

Convexity of $f_\lambda$ gives $f_\lambda(d_v^{\sigma}) \geq f_\lambda(d) + (d_v^\sigma - d)f_\lambda'(d)$ for $\sigma \in \{\mathrm{out},\mathrm{in}\}$, and by induction 
\begin{align*}
    Z_G(\lambda) &= Z_{G_v^{\mathrm{out}}}(\lambda) + \lambda \cdot Z_{G_v^{\mathrm{in}}}(\lambda)
    \geq e^{f_\lambda(d_v^{\mathrm{out}})n_v^{\mathrm{out}}} + \lambda e^{f_\lambda(d_v^{\mathrm{in}})n_v^{\mathrm{in}}} \\ 
    &\geq e^{f_\lambda(d)n} \left[e^{(n_v^{\mathrm{out}}-n)f_\lambda(d)+n_v^{\mathrm{out}}(d_v^{\mathrm{out}}-d) f_\lambda'(d)} + \lambda e^{(n_v^{\mathrm{in}}-n)f_\lambda(d)+n_v^{\mathrm{in}}(d_v^{\mathrm{in}}-d) f_\lambda'(d)} \right] \\
    & = e^{f_\lambda(d)n} \left[e^{(n-n_v^{\mathrm{out}})(d f_\lambda'(d) - f_\lambda(d)) - 2(m-m_v^{\mathrm{out}}) f_\lambda'(d)} + \lambda e^{(n-n_v^{\mathrm{in}})(d f_\lambda'(d) - f_\lambda(d)) - 2(m-m_v^{\mathrm{in}}) f_\lambda'(d)} \right].
\end{align*}
The proof is completed if we can show that there exists a vertex $v$ such that the term between square brackets is at least $1$. 

Let $\mathbf{v}$ be a vertex chosen uniformly at random.
We show that the required inequality holds in expectation.
Observe that $\mathbb{E}[(n - n_{\mathbf{v}}^{\mathrm{out}})] = 1$, $\mathbb{E}[(n - n_{\mathbf{v}}^{\mathrm{in}})] = d + 1$, $\mathbb{E}[(m - m_{\mathbf{v}}^{\mathrm{out}})] = d$, and 
\[\mathbb{E}[(m - m_{\mathbf{v}}^{\mathrm{in}})] = \sum_{uw \in E} \hspace{-3pt}\mathbb{P}[\mathbf{v} \in N(u) \cup N(w)] = \sum_{uw \in E} \frac{\deg(u) + \deg(w)}{n}    = \frac{1}{n}\sum_{u \in V} \deg(u)^2 \geq d^2,\]
where the last inequality follows from the Cauchy--Schwarz inequality, and the second equality is the only place in the proof where the condition of triangle-freeness is needed.
We thus have 
\begin{align*}
    \mathbb{E}\bigl[(n-n_{\mathbf{v}}^{\mathrm{out}})(d f_\lambda'(d) - f_\lambda(d)) - 2(m-m_{\mathbf{v}}^{\mathrm{out}}) f_\lambda'(d)\bigr] & = (d f_\lambda'(d) - f_\lambda(d)) - 2df_\lambda'(d)\\
    &= -df_\lambda'(d) - f_\lambda(d); \\
    \mathbb{E}\bigr[(n-n_{\mathbf{v}}^{\mathrm{in}})(d f_\lambda'(d) - f_\lambda(d)) - 2(m-m_{\mathbf{v}}^{\mathrm{in}}) f_\lambda'(d)\bigr] &\geq (d+1)(d f_\lambda'(d) - f_\lambda(d)) - 2d^2f_\lambda'(d)\\ 
    &= (d-d^2)f_\lambda'(d) - (d+1) f_\lambda(d);
\end{align*}
where we have used $f_\lambda'(d) \leq 0$.
The proof is now concluded by using linearity of expectation, Jensen's inequality applied to the exponential, and inequality~\eqref{eq:lambdaineq}. 
\end{proof}

\noindent
The main difficulty in the rest of the proof is to find a non-increasing convex function $f_\lambda$ that satisfies both the differential inequality~\eqref{eq:lambdaineq} and $f_\lambda(x) = (1+o(1))(\log x)^2/2x$ as $x\to\infty$. 
Even only in the case $\lambda=1$, this task is far from obvious.
Our exposition below omits a graveyard lined with attempted choices for $f_\lambda$, many that could only be reasonably verified by eyeing plots output by mathematical software.
    
\begin{lemma}\label{lem:holygrail}\mbox{}\\*
    For $\lambda \in [0,1]$, writing  $c_\lambda = W(2\lambda) + \tfrac12 W(2\lambda)^2$, the following function satisfies the conditions in Lemma~\ref{lem:mainlambda}:
    \[f_\lambda (x) = \frac{W(\lambda x) + \tfrac12 W(\lambda x)^2 - c_\lambda}{x - 2}.\]
\end{lemma}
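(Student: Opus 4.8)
The plan is to verify directly that the explicit function $f_\lambda$ satisfies the three requirements of Lemma~\ref{lem:mainlambda}: that it is non-increasing, that it is convex, and that it satisfies the differential inequality~\eqref{eq:lambdaineq}. The key simplification will come from the substitution $w = W(\lambda x)$, equivalently $\lambda x = w e^{w}$, which turns the Lambert-$W$ expressions into polynomials in $w$. Under this change of variable one has $\frac{dw}{dx} = \frac{\lambda}{(1+w)\lambda x} \cdot \ldots$; more precisely, differentiating $w e^{w} = \lambda x$ gives $w'(x) = \frac{w}{x(1+w)}$ (using $w e^w = \lambda x$ again to eliminate the exponential). So I would first record the closed forms
\[
g(x) := W(\lambda x) + \tfrac12 W(\lambda x)^2 = w + \tfrac12 w^2, \qquad g'(x) = \frac{(1+w)\,w'}{1} \cdot \ldots = \frac{w(1+w)}{x(1+w)} = \frac{w}{x},
\]
so that $g'(x) = W(\lambda x)/x$, and then $f_\lambda(x) = \bigl(g(x) - c_\lambda\bigr)/(x-2)$ with $f_\lambda'(x) = \frac{g'(x)(x-2) - (g(x)-c_\lambda)}{(x-2)^2}$. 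Note $c_\lambda = g$ evaluated ``at $x=2$'' in the sense $c_\lambda = W(2\lambda) + \tfrac12 W(2\lambda)^2$, which is exactly the value making $f_\lambda$ extend analytically across the apparent singularity at $x=2$ (l'Hôpital: $f_\lambda(2) = g'(2) = W(2\lambda)/2$), so $f_\lambda$ is genuinely real-analytic on all of $\mathbb{R}_{\ge 0}$.

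Next I would establish monotonicity and convexity. For \textbf{non-increasing}: writing $h(x) = g(x) - c_\lambda - (x-2)g'(x)$ one has $f_\lambda'(x) = -h(x)/(x-2)^2$, so it suffices to show $h(x) \ge 0$ for all $x \ge 0$. Since $h(2) = 0$, it is enough to check $h'(x) = g'(x) - g'(x) - (x-2)g''(x) = -(x-2)g''(x)$ has the right sign, i.e. that $g'' \le 0$ (so $g$ concave, $h$ increasing past $2$ and decreasing before $2$, hence $h \ge 0$ everywhere). And $g'(x) = W(\lambda x)/x$ is indeed decreasing in $x$ because $W(\lambda x)$ grows sublinearly — concretely $g''(x) = \frac{w'x - w}{x^2} = \frac{w/(1+w) - w}{x^2} = \frac{-w^2}{x^2(1+w)} \le 0$. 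For \textbf{convexity} I would compute $f_\lambda''$ and reduce, via the same substitution, to a one-variable inequality in $w \ge 0$ (with the parameter $\lambda$ entering only through the constant $c_\lambda$, itself a function of $W(2\lambda)$); I expect this to boil down to showing a rational-in-$w$ expression is nonnegative, which one checks by clearing denominators and factoring.

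The main obstacle is the \textbf{differential inequality}~\eqref{eq:lambdaineq}. Substituting the closed forms for $f_\lambda$ and $f_\lambda'$, multiplying out, and applying the substitution $w = W(\lambda x)$ should reduce~\eqref{eq:lambdaineq} to an inequality of the form $\exp(A(w)) + \lambda \exp(B(w)) \ge 1$ where $A, B$ are explicit rational functions of $w$ (and of the constant $W(2\lambda)$ through $c_\lambda$). The exponents will not simplify to anything linear, so the inequality must be handled analytically rather than algebraically: I would aim to show that the left-hand side, as a function of $x$ (or $w$), attains its minimum at the boundary or at a critical point where it equals exactly $1$ — this is presumably where the specific choice of $c_\lambda$ and the restriction $\lambda \in [0,1]$ are essential (the restriction $\lambda \le 1$ is flagged in the text as the place $[0,1]$ is used). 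A natural route: show equality holds identically in a limiting regime (e.g. as $x \to \infty$, matching the asymptotics $f_\lambda(x) \sim (\log x)^2/2x$), then show the derivative in $x$ of $\exp(A) + \lambda\exp(B) - 1$ never lets it dip below $1$, i.e. the function is monotone in the relevant range, or that any interior critical point is a point where the value is $\ge 1$. Checking that $A(w)$ and $B(w)$ have the right signs and growth — in particular that the first term $\exp(-xf_\lambda' - f_\lambda)$ alone already nearly suffices for large $x$ while the $\lambda$-term carries the load for small $x$ — is the delicate computation; I anticipate it requires bounding the error terms in the expansion $W(x) = \log x - \log\log x + o(1)$ carefully, or, better, exploiting an exact identity that the choice of $f_\lambda$ was reverse-engineered to satisfy (the phrasing ``solving~\eqref{eq:lambdaineq} with equality'' suggests $f_\lambda$ may in fact satisfy~\eqref{eq:lambdaineq} with equality along some curve, or exactly in the $\lambda=1$ or $x\to\infty$ limit, which would pin down the proof).
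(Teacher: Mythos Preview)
Your monotonicity argument is correct and in fact cleaner than the paper's: writing $g(x)=W(\lambda x)+\tfrac12 W(\lambda x)^2$ and $h=g-c_\lambda-(x-2)g'$, you reduce $f_\lambda'\le 0$ to $h\ge 0$, and since $h(2)=0$ and $h'(x)=-(x-2)g''(x)$ with $g''(x)=-w^2/\bigl(x^2(1+w)\bigr)\le 0$, this is immediate. The same device pushed one derivative further also settles convexity: the numerator of $(x-2)^3 f_\lambda''$ vanishes at $x=2$ and has derivative $(x-2)^2 g'''$, and one computes $g'''(x)=w^3(3+2w)/\bigl(x^3(1+w)^3\bigr)\ge 0$, so the numerator changes sign exactly at $x=2$ as required. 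Your ``clear denominators and factor'' expectation is therefore right, though you did not carry it out. (The paper takes a different route for both properties: it isolates $c_\lambda$ and shows the resulting equation has $x=2$ as its only solution.)

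The genuine gap is in the differential inequality~\eqref{eq:lambdaineq}. Your intuition that $f_\lambda$ was reverse-engineered to satisfy an exact identity is correct, but you have not found it: one has \emph{exactly}
\[
-x f_\lambda(x) - x(x-2) f_\lambda'(x) \;=\; -W(\lambda x),
\]
so the second exponent in~\eqref{eq:lambdaineq} collapses and the left-hand side factors as $\exp\bigl((2f_\lambda(x)-W(\lambda x))/(x-2)\bigr)\cdot\bigl(1+\lambda e^{-W(\lambda x)}\bigr)$. The paper then substitutes $x\mapsto x e^{\lambda x}$ to eliminate $W$, applies the elementary bound $\log(1+y)\ge 2y/(y+2)$ to the second factor, and is left with showing a single explicit function $r_\lambda$ is nonnegative --- handled, once more, by showing its unique critical point lies where $r_\lambda$ vanishes (at the image of $x=2$); this uniqueness step is precisely where $\lambda\le 1$ enters, via $e^{\lambda x}\ge \lambda$ for $x\ge 0$. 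Your proposed alternative --- asymptotic expansion of $W$, or sharpness only as $x\to\infty$ --- cannot work: the minimum of the reduced problem sits at the image of $x=2$, not at infinity, and no asymptotic argument controls the full range $x\ge 0$. Without the identity above, your exponents $A(w),B(w)$ remain genuine two-parameter rational functions (in $w$ and in $c_\lambda$) and the analysis does not close.
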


\begin{proof}
The choice of $c_\lambda$ is made to remove the singularity at $x=2$, so that $f_\lambda$ is analytic on~$\mathbb{R}_{\geq 0}$.
We will prove that $f_\lambda$ is non-increasing, convex, and satisfies inequality \eqref{eq:lambdaineq}.
For $\lambda = 0$ these clearly hold, as we have $f_{0}$ is identically $0$.
We may thus assume that $\lambda > 0$.

\medskip
\noindent
\textit{$f_\lambda$ is non-increasing.\quad}%
We can write 
    \[f_\lambda'(x) = \frac{m_\lambda(x)}{x (x-2)^2},\]
where $m_{\lambda}(x) = c_\lambda x - \frac{1}{2} x W(\lambda  x)^2-2 W(\lambda  x)$.
We thus have to show that $m_\lambda(x)$ is non-positive for all $x\ge0$.
We observe that $m_\lambda(2) = m_\lambda'(2) = 0$ and 
\[m_{\lambda}''(2) = -\frac{W(2 \lambda )^2}{2 W(2 \lambda )+2} < 0.\] 
Therefore, the function $m_\lambda$ attains a local maximum at $x =2$.
It is now sufficient to show that $m_{\lambda}(x) = 0$ has no other solution than $x = 2$.
For $x$ with $m_{\lambda}(x) = 0$ we have 
\[c_{\lambda} = \frac{\frac{1}{2}x W(\lambda  x)^2+2 W(\lambda  x)}{x}.\]
The right-hand side of this expression attains its global minimum at $x=2$, as can be verified by examining its derivative:
\[\frac{d}{dx}\left[\frac{x W(\lambda  x)^2+4 W(\lambda  x)}{2 x}\right] = \frac{(x-2) W(\lambda  x)^2}{x^2 (W(\lambda  x)+1)}.\]
It follows that $m_\lambda(x) = 0$ has a unique solution at $x=2$, and thus $m_{\lambda}(x) \leq 0$ for all $x$.

\medskip
\noindent
\textit{$f_\lambda$ is convex.\quad}%
We can write 
\[f_\lambda''(x) = \frac{k_\lambda(x)}{(x-2)^3 x^2 (W(\lambda  x)+1)},\]
where $k_\lambda(x) = W(\lambda  x) \left(x^2 W(\lambda  x)^2+(8 x-4) W(\lambda  x)+4 x\right)-2 x^2 (W(\lambda  x)+1) \cdot c_\lambda$.
The goal is thus to show that $k_\lambda(x) \leq 0$ for $x \leq 2$ and $k_{\lambda}(x) \geq 0$ for $x \geq 2$.
We observe that $k_\lambda(2) = k_{\lambda}'(2) = k_{\lambda}''(2) =  0$ and 
\[k_{\lambda}'''(2) = \frac{W(2 \lambda )^3 (2 W(2 \lambda )+3)}{(W(2 \lambda )+1)^2}>0.\]
It follows that at $x = 2$ the function $k_{\lambda}$ changes sign from negative to positive.
Hence we are done if we show that $x = 2$ is the only zero of $k_{\lambda}$.
For $x$ with $k_{\lambda}(x) = 0$ we have 
\[c_{\lambda} = \frac{W(\lambda  x) \left(x^2 W(\lambda  x)^2+(8 x-4) W(\lambda  x)+4 x\right)}{2 x^2 (W(\lambda  x)+1)}.\]
The left-hand side is constant, while the right-hand side is strictly increasing, as again can be verified by examining its derivative:
\[\frac{d}{dx}\left[\frac{W(\lambda  x) \left(x^2 W(\lambda  x)^2+(8 x-4) W(\lambda  x)+4 x\right)}{2 x^2 (W(\lambda  x)+1)}\right] = \frac{(x-2)^2 W(\lambda  x)^3 (2 W(\lambda  x)+3)}{2 x^3 (W(\lambda  x)+1)^3}.\]
It follows that $k_{\lambda}(x) = 0$ has a unique solution at $x =2$, as required.

\medskip
\noindent
\textit{$f_\lambda$ satisfies inequality \eqref{eq:lambdaineq}.\quad}%
Observe that inequality~\eqref{eq:lambdaineq} is equivalent to $F_\lambda(x) \geq 1$, where 
\[F_\lambda(x) = e^{-xf_\lambda'(x)-f_\lambda(x)} \left[1 + \lambda e^{-x f_\lambda(x) - x(x-2)f_\lambda'(x)}\right].\]
The function $f_\lambda$ satisfies $-x f_\lambda(x) - x(x-2)f_\lambda'(x) = -W(\lambda x)$.
And thus $F_\lambda$ simplifies to 
\[F_\lambda(x) = e^{\frac{2f_\lambda(x) - W(\lambda x)}{x-2}}(1 + \lambda e^{-W(\lambda x)}).\]
To show that $F_\lambda(x) \geq 1$ for all $x \geq 0$, we may equivalently show that $G_\lambda(x) := F_\lambda(xe^{\lambda x}) \geq 1$ for all $x \geq 0$.
We let
\[g_\lambda(x) = f_\lambda(xe^{\lambda x}) = \frac{\lambda x + \tfrac12 (\lambda x)^2 - c_\lambda}{xe^{\lambda x} - 2}.\]
And thus $G_\lambda$ simplifies to 
\[G_\lambda(x) = e^{\frac{2g_\lambda(x) - \lambda x}{xe^{\lambda x} - 2}}(1 + \lambda e^{-\lambda x}).\]
Showing that $G_\lambda(x)\ge1$ is equivalent to showing that $\log(G_\lambda(x))\ge0$.
We use the inequality $\log(1 + y) \geq \dfrac{2y}{y+2}$ for all $y\ge0$ to get
\begin{align*}
        \log( G_\lambda(x) )
        &= \frac{2g_\lambda(x) - \lambda x}{xe^{\lambda x} - 2} + \log(1 + \lambda e^{-\lambda x})\\ 
        &\geq \frac{2g_\lambda(x) - \lambda x}{xe^{\lambda x} - 2} + \frac{2\lambda e^{-\lambda x}}{\lambda e^{-\lambda x} + 2} =(xe^{\lambda x} - 2)^{-2} \cdot (e^{\lambda x} + 2\lambda)^{-1} \cdot r_\lambda(x),
\end{align*}
where 
\[r_\lambda(x) = \lambda\bigl(4 + (2+\lambda x)^2 + \lambda x^2 e^{\lambda x}\bigr) - 2(2e^{\lambda x} + \lambda) \cdot c_\lambda.\]
It is now sufficient to show that $r_\lambda(x)$ is non-negative for all $x \geq 0$.
We observe that $r(\lambda^{-1}W(2\lambda)) = 0$, $r'(\lambda^{-1}W(2\lambda)) = 0$, and 
\[r''(\lambda^{-1}W(2\lambda)) = 2\lambda^3 \cdot W(2\lambda)^{-1} (2+W(2\lambda) - W(2\lambda)^2).\]
We note that $r''(\lambda^{-1}W(2\lambda)) > 0$ for $0 < \lambda < e^2$.
    
We see that it is enough to show that $r_\lambda'(x) = 0$ has only one solution for $x > 0$, because this implies that $x = \lambda^{-1}W(2\lambda)$ is the global minimum of $r_\lambda$.
Observe that setting $r_\lambda'(x) = 0$ leads to the equation
\[c_\lambda = \tfrac{1}{4}\lambda e^{-\lambda x} \cdot (2 + \lambda x) \cdot (2 + xe^{\lambda x}).\]
The expression on the right-hand side is increasing in $x$, as can be seen by taking its derivative:
\[\frac{d}{dx} \left[\tfrac{1}{4}\lambda e^{-\lambda x} \cdot (2 + \lambda x) \cdot (2 + xe^{\lambda x}) \right] = \tfrac{1}{2}\lambda e^{-\lambda x} \cdot (1 + \lambda x)\cdot (e^{\lambda x} - \lambda).\]
It follows that the equation $r_\lambda'(x) = 0$ has a unique solution for $x \in [0,\infty)$.

This concludes the proof.
\end{proof}

\noindent
The two lemmas are the two halves of the following result. 

\begin{theorem}\label{thm:mainlambda}\mbox{}\\*
Let $\lambda\in[0,1]$.
If $G=(V,E)$ is a  triangle-free graph of average degree $d$, then 
\begin{equation}
    \label{eq:lowbound}
    \log( Z_{G}(\lambda)) \ge \frac{W(\lambda d)^2+2W(\lambda d)-W(2\lambda)^2-2W(2\lambda)}{2(d - 2)}\cdot|V|.
\end{equation}
\end{theorem}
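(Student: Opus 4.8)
The statement to be proven, Theorem~\ref{thm:mainlambda}, asserts the lower bound~\eqref{eq:lowbound} on $\log Z_G(\lambda)$ for triangle-free $G$ with $\lambda\in[0,1]$. The plan is simply to combine the two preparatory lemmas. First I would invoke Lemma~\ref{lem:holygrail}, which exhibits the explicit function
\[
f_\lambda(x)=\frac{W(\lambda x)+\tfrac12 W(\lambda x)^2-c_\lambda}{x-2},\qquad c_\lambda=W(2\lambda)+\tfrac12 W(2\lambda)^2,
\]
and certifies that, for $\lambda\in[0,1]$, it is non-increasing, differentiable (indeed analytic on $\mathbb{R}_{\ge 0}$, the choice of $c_\lambda$ precisely cancelling the apparent pole at $x=2$), convex, and satisfies the differential inequality~\eqref{eq:lambdaineq}. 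Then I would feed this $f_\lambda$ into Lemma~\ref{lem:mainlambda}, whose hypotheses are exactly those four properties; its conclusion gives $\log Z_G(\lambda)\ge f_\lambda(d)\cdot|V|$ for every triangle-free $G=(V,E)$ of average degree $d$. Substituting the formula for $f_\lambda(d)$ and expanding $c_\lambda$ yields precisely the right-hand side of~\eqref{eq:lowbound}, completing the proof.

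There is essentially no genuine obstacle left at this stage: the analytic heavy lifting (verifying monotonicity via $m_\lambda$, convexity via $k_\lambda$, and inequality~\eqref{eq:lambdaineq} via the auxiliary $F_\lambda,G_\lambda,r_\lambda$ reductions and the elementary bound $\log(1+y)\ge 2y/(y+2)$) has been carried out in Lemma~\ref{lem:holygrail}, and the induction \`a la Shearer has been carried out in Lemma~\ref{lem:mainlambda}. The only point meriting a word of care is the evaluation at $d=2$: here $f_\lambda(d)$ is defined by analytic continuation (the limit as $x\to 2$), and correspondingly the right-hand side of~\eqref{eq:lowbound} should be read in the limiting sense, where one checks using $W(2\lambda)+1\ne 0$ that the quotient extends continuously. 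Since Lemma~\ref{lem:holygrail} already asserts analyticity of $f_\lambda$ on all of $\mathbb{R}_{\ge 0}$, and Lemma~\ref{lem:mainlambda} applies to that analytic function, this case is automatically covered and needs at most a parenthetical remark.

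Accordingly the write-up will be short: one sentence recalling Lemma~\ref{lem:holygrail}, one sentence applying Lemma~\ref{lem:mainlambda}, and one line of algebra rewriting $f_\lambda(d)$ in the stated form. I would also add a closing observation tying back to the introduction --- namely that setting $\lambda=1$ recovers Theorem~\ref{thm:main} redux's lower bound, and that letting $\lambda\to\infty$ in the analogous family (outside the range $[0,1]$ where~\eqref{eq:lambdaineq} was verified, so this is only heuristic here) would connect to Shearer's Theorem via Lemma~\ref{lem:largelambda} --- though strictly speaking only the $\lambda\in[0,1]$ statement is what Theorem~\ref{thm:mainlambda} claims, so this belongs in a remark rather than the proof proper. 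If anything is the ``main obstacle,'' it is purely expository: making sure the reader sees that Theorem~\ref{thm:mainlambda} is nothing more than the concatenation of the two lemmas, with the substitution $x=d$.
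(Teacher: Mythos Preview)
Your proposal is correct and matches the paper's approach exactly: the paper itself simply states that ``the two lemmas are the two halves of the following result,'' i.e.\ Theorem~\ref{thm:mainlambda} is just Lemma~\ref{lem:holygrail} fed into Lemma~\ref{lem:mainlambda}. Your remarks about the removable singularity at $d=2$ and the connection to $\lambda=1$ are fine as parenthetical comments but are not needed for the proof.
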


\medskip
\noindent
Stated differently, Theorem~\ref{thm:mainlambda} asserts that, if $\lambda\in[0,1]$, then
\[\mathcal{F}_\lambda(d) \ge \frac{W(\lambda d)^2+2W(\lambda d)-W(2\lambda)^2-2W(2\lambda)}{2(d - 2)}.\]
This bound specialised to $\lambda=1$ implies Theorem~\ref{thm:main} as in its first statement.
In fact, the asymptotic bound on $\icount(G)=Z_G(1)$ is also valid for $Z_G(\lambda)$ for any fixed $\lambda\in(0,1]$.

\begin{remark}
The condition $\lambda \le 1$ is not special.
Through a more sophisticated analysis, we can prove that \autoref{lem:holygrail} (and thus also \autoref{thm:mainlambda}) extends to $\lambda \in [0, \lambda_M]$, where $\lambda_M \approx 2.61$.
Furthermore, numerical evidence suggests that \autoref{lem:holygrail} remains valid for $\lambda$ up to approximately $\lambda_M \approx 11.65$.
This would be close to a theoretical limit for \autoref{thm:mainlambda} because of the following.
For the edgeless graph $G=(V,\varnothing)$ we have $\log(Z_{G}(\lambda)) = \log(1+\lambda) \cdot |V|$, while the right-hand side of inequality~\eqref{eq:lowbound} for $d = 0$ simplifies to $\frac{1}{4}(W(2\lambda)^2+2 W(2\lambda)) \cdot |V|$.
For $\lambda \geq 13.971$ the latter expression is greater than the former.
\end{remark}

\begin{remark}
Despite the previous remark, Lemma~\ref{lem:largelambda} suggests we should inspect the bound~\eqref{eq:lowbound} for large $\lambda$, hypothetically speaking.
We have
\[\frac{W(\lambda d)^2+2W(\lambda d)-W(2\lambda)^2-2W(2\lambda)}{2(d - 2)} = \left(\frac{\log d-\log 2}{d-2}+o(1)\right)\log\lambda\quad\text{as $\lambda\to\infty$}.\]
This would correspond to a  lower bound on $\widetilde{\mathcal{F}}_\infty(d)$ that is too large for smaller values of~$d$.
(Certainly we must exclude $d$ up to $4$, for example, due to work in~\cite{LPRR11}.)
But we cannot rule it out as a possibly valid bound for larger $d$ yet.
By way of comparison, the lower bound on the independence polynomial (albeit in the maximum degree setting) in~\cite[Thm.~4]{DJPR18} corresponds to an analogous expression of order $(\log\log \lambda)^2$ as $\lambda\to\infty$.
\end{remark}

\begin{remark}\label{rem:shearer}
If $f$ is any non-increasing differentiable convex function satisfying~\eqref{eq:shearer}, then the function $d \mapsto f(d) \cdot \log\lambda$ satisfies the hypothesis of Lemma~\ref{lem:mainlambda} for any $\lambda \geq 1$.
By Lemma~\ref{lem:mainlambda} we have $\mathcal{F}_\lambda(d) \ge f(d)\log \lambda$, and thus by Lemma~\ref{lem:largelambda} we have $\widetilde{\mathcal{F}}_\infty(d) = \lim\limits_{\lambda\to\infty}\mathcal{F}_\lambda(d)/\log \lambda \ge f(d)$.
In this way, Lemma~\ref{lem:mainlambda} is a strengthening of Shearer's Theorem in its sharpest form.

This raises the following analytic question, which could test the strength of our induction as compared to Shearer's original induction. For $d\ge 0$ and $\lambda\ge 0$, define 
\begin{align*}
\tilde{g}(d)
& =  \sup\{f(d) : \text{$f$ non-increasing differentiable convex satisfying~\eqref{eq:shearer}}\};\\
g_\lambda(d)
& = \sup\{f_\lambda(d) : \text{$f_\lambda$ non-increasing differentiable convex satisfying~\eqref{eq:lambdaineq}}\}.
\end{align*}
It is not difficult to show that $\tilde{g}$ is in fact the choice of $f$ that Shearer~\cite{She83} found to solve~\eqref{eq:shearer} with equality.
Is there some $d>0$ for which
$\limsup\limits_{\lambda\to\infty} g_\lambda(d)/\log\lambda > \tilde{g}(d)$?
\end{remark}

\section{Sharpness}\label{sec:sharpness}

\noindent
The following probabilistic construction backstops the precise restatement of Theorem~\ref{thm:main}, as well as an analogous version of Theorem~\ref{thm:mainlambda}, by giving upper bounds on $\mathcal{F}_\lambda(d)$.

\begin{proposition}\label{prop:construction}\mbox{}\\*
For $\lambda\ge 0$, $d>0$ define
\[\ratioexp(\lambda,d)=
    \begin{cases} 
\frac{W(\lambda d)^2+2W(\lambda d)}{2d} & \text{if  $\log\lambda \le d$}; \\
1 - \frac12 d + \log\lambda & \text{otherwise}.
\end{cases}\]
Then for all $\varepsilon>0$ we have that for sufficiently large $n$, there exists a triangle-free graph $G=(V,E)$ with $|V|=n$ and $d(G) < d+\varepsilon$ such that $\log(Z_{G}(\lambda)) < \left(\ratioexp(\lambda,d)+\varepsilon\right)|V|$.
\end{proposition}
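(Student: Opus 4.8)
The plan is to realise the bound via the binomial random graph $G_0 = G(n, d/n)$: bound $\mathbb{E}[Z_{G_0}(\lambda)]$ by a first-moment calculation, use Markov's inequality together with concentration of the edge count and the (constant) expected number of triangles to fix a good outcome, delete one vertex from each triangle to make it triangle-free, and pad with isolated vertices to restore the order to exactly $n$. The analytic heart of the matter is the identity
\[\ratioexp(\lambda,d) = \sup_{x\in[0,1]}\psi_d(x), \qquad \psi_d(x) := x(1-\log x) + x\log\lambda - \tfrac{d}{2}x^2,\]
with the convention $\psi_d(0):=0$; I would first establish this and then show its right-hand side dominates the exponential growth rate of $\mathbb{E}[Z_{G_0}(\lambda)]$.

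For the first moment, $\mathbb{E}[Z_{G_0}(\lambda)] = \sum_{k=0}^{n}\binom{n}{k}\lambda^{k}(1-d/n)^{\binom{k}{2}}$. Using $\binom{n}{k}\le e^{nH(k/n)}$ with $H(x)=-x\log x-(1-x)\log(1-x)$, together with $(1-d/n)^{\binom{k}{2}}\le e^{-\binom{k}{2}d/n}\le e^{d/2}e^{-dk^{2}/(2n)}$, the $k$-th summand is at most $e^{d/2}\exp\!\bigl(n[\,H(k/n)+(k/n)\log\lambda-\tfrac{d}{2}(k/n)^{2}\,]\bigr)$. The elementary inequality $H(x)\le x(1-\log x)$ on $[0,1]$ — equivalently $(1-x)\log(1-x)+x\ge0$, which holds since the left side vanishes at $0$ and has derivative $-\log(1-x)\ge0$ — bounds each summand by $e^{d/2}\exp(n\,\psi_d(k/n))\le e^{d/2}\exp(n\,\ratioexp(\lambda,d))$. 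Summing the $n+1$ terms yields $\tfrac1n\log\mathbb{E}[Z_{G_0}(\lambda)]\le \ratioexp(\lambda,d)+\tfrac1n(\log(n+1)+\tfrac d2) = \ratioexp(\lambda,d)+o(1)$. (Note the argument never uses $\lambda\le1$; it is uniform in $\lambda\ge0$.)

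To justify the identity $\ratioexp(\lambda,d)=\sup_{[0,1]}\psi_d$, observe $\psi_d''(x)=-1/x-d<0$, so $\psi_d$ is concave on $(0,1)$, with $\psi_d'(x)=\log(\lambda/x)-dx$ and $\psi_d'(0^{+})=+\infty$. If $\psi_d'(1)=\log\lambda-d\ge0$, that is $\log\lambda\ge d$, then $\psi_d$ is non-decreasing on $[0,1]$ and the supremum is $\psi_d(1)=1+\log\lambda-\tfrac d2$. If $\log\lambda<d$, the supremum is attained at the interior stationary point $x^{\star}$ with $\log(\lambda/x^{\star})=dx^{\star}$, i.e.\ $x^{\star}=W(\lambda d)/d$; substituting and using $1-\log x^{\star}+\log\lambda=1+dx^{\star}$ gives $\psi_d(x^{\star})=x^{\star}+\tfrac d2(x^{\star})^{2}=\tfrac{W(\lambda d)^{2}+2W(\lambda d)}{2d}$. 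Since $W$ is increasing and $W(de^{d})=d$, the condition $x^{\star}\le1$ is precisely $\log\lambda\le d$, so the case split matches the definition of $\ratioexp$. This two-case optimisation, together with the entropy inequality of the previous paragraph, is essentially the only genuine content of the proof.

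Finally, the alteration. Fix $\varepsilon>0$. Markov gives $\Pr\bigl(Z_{G_0}(\lambda)>2\mathbb{E}[Z_{G_0}(\lambda)]\bigr)<\tfrac12$; the expected number of triangles is $\binom{n}{3}(d/n)^{3}\to d^{3}/6$, so with probability tending to $1$ there are fewer than $\log n$ triangles; and $|E(G_0)|\sim\mathrm{Bin}\bigl(\binom n2,d/n\bigr)$ concentrates around $dn/2$, so $\Pr(d(G_0)\ge d+\varepsilon)\to0$. Hence for $n$ large there is an outcome $G_0$ satisfying all three, which we fix. Deleting one vertex per triangle produces a triangle-free $G_1\subseteq G_0$ on at least $n-\log n$ vertices, and since removing a vertex cannot increase the independence polynomial, $Z_{G_1}(\lambda)\le Z_{G_0}(\lambda)$. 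Let $G$ be $G_1$ together with $n-|V(G_1)|\le\log n$ isolated vertices, so $|V(G)|=n$, $G$ is triangle-free, $d(G)=2|E(G_1)|/n\le 2|E(G_0)|/n=d(G_0)<d+\varepsilon$, and $Z_G(\lambda)=Z_{G_1}(\lambda)(1+\lambda)^{\,n-|V(G_1)|}\le Z_{G_0}(\lambda)(1+\lambda)^{\log n}$. Taking logarithms, $\log Z_G(\lambda)\le\bigl(\ratioexp(\lambda,d)+o(1)\bigr)n+\log 2+(\log n)\log(1+\lambda)<\bigl(\ratioexp(\lambda,d)+\varepsilon\bigr)|V|$ once $n$ is large enough. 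There is no deep obstacle in any of this; the main things requiring care are handling the boundary optimum $x=1$ (the second branch of $\ratioexp$) correctly in the optimisation, and keeping the bookkeeping of the three small-probability events and the restored vertex count mutually consistent.
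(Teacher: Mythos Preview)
Your proof is correct and follows essentially the same route as the paper: a first-moment bound on $\mathbb{E}[Z_{G(n,d/n)}(\lambda)]$, the identical optimisation of $\psi_d(\eta)=\eta(1-\log\eta)+\eta\log\lambda-\tfrac{d}{2}\eta^2$ yielding $\ratioexp(\lambda,d)$, and Markov's inequality combined with concentration of the edge count. The one difference is in securing triangle-freeness: the paper invokes the Poisson limit $\Pr(\text{no triangles in }G(n,d/n))\to e^{-d^3/6}>0$ and takes a triangle-free outcome directly, whereas you delete one vertex per triangle and pad with isolates---slightly more bookkeeping, but a shade more elementary since it only needs the first moment of the triangle count.
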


\begin{proof}
Since $Z_0(G)=1$ for all $G$, the conclusion is trivial if $\lambda=0$.
We now assume $\lambda>0$.

Let $\mathbf{G}\sim G(n,d/n)$ be distributed as the random graph on $n$ vertices where each edge is included independently at random with probability $d/n$.
We will perform various routine random graph computations; see~\cite{JLR00} for background.
It is a standard fact that
\begin{equation}\label{eq:avgdegprob}
\Pr(d(\mathbf{G}) \ge d+\varepsilon) \to 0\quad \text{as $n\to\infty$}.
\end{equation}
Writing $X$ for the number of triangles in $\mathbf{G}$, it is known (see~\cite[Sec.~3.3]{JLR00}) that
\begin{equation}\label{eq:triangleprob}
\Pr(X=0) \to \exp(-d^3/6)\quad \text{as $n\to\infty$}.
\end{equation}

The expectation of the partition function of the hard-core model on $\mathbf{G}$ at fugacity $\lambda$ satisfies
\begin{align*}
\mathbb{E}(Z_{\mathbf{G}}(\lambda))
& = \sum_{k=0}^n \binom{n}{k} \left(1- \frac{d}{n}\right)^{\binom{k}{2}}\lambda^k 
\le  \sum_{k=0}^n \left(\frac{en}{k}\right)^k e^{-\frac{d}{n}\binom{k}{2}}\lambda^k \\
&\le  \sqrt{e^d} \sum_{k=0}^n \exp\left[k\left(1+\log n+\log \frac{\lambda}{k}-\frac{dk}{2n}\right)\right].
\end{align*}
Let us now optimise the exponent in the summand, by maximising it according to the scaling $k=\eta n$, $\eta\in[0,1]$.
Thus we write the exponent as
\begin{equation}\label{eq:exponent}
\left(\eta+\eta\log\frac{\lambda}{\eta}-\tfrac12 d\eta^2\right)n.
\end{equation}
By computing the derivative with respect to~$\eta$ and observing concavity for $\eta\in[0,1]$, we find that the maximum for~\eqref{eq:exponent} occurs when
\[d=\frac{1}{\eta}\log\frac{\lambda}{\eta}\text{ and }\eta\in[0,1], 
\quad \text{which has solution } \eta= \begin{cases} 
\frac{W(\lambda d)}{d}, & \text{if $\log\lambda \le d$}; \\
1, & \text{otherwise}.
\end{cases}\]
This means that~\eqref{eq:exponent} has the maximum value 
\[\left(\eta+\tfrac12 d\eta^2\right)n= \left(\frac{2W(\lambda d)+W(\lambda d)^2}{2d}\right)n\]
if $\log\lambda \le d$, and the maximum value
\[\left(1 - \tfrac12 d + \log\lambda\right)n\]
otherwise.
Substituting this maximisation back into the summation, we obtain that
\[\mathbb{E}(Z_{\mathbf{G}}(\lambda))\le \sqrt{e^d}\, n \exp\left[\left(\ratioexp(\lambda,d)\right)n\right] \le \exp\left[\left(\ratioexp(\lambda,d)+\tfrac12\varepsilon\right)n\right],\]
provided $n$ is taken sufficiently large.
By Markov's inequality,
\begin{equation}\label{eq:partitionprob}
\Pr\bigl(Z_{\mathbf{G}}(\lambda) \ge \exp\left[\left(\ratioexp(\lambda,d)+\varepsilon\right)n\right]\bigr) \le \exp\left(-\tfrac12\varepsilon n\right) \to 0\quad \text{as $n\to\infty$}.
\end{equation}

By the probabilistic method, the bounds~\eqref{eq:avgdegprob},~\eqref{eq:triangleprob}, and~\eqref{eq:partitionprob} give the result.
\end{proof}

\begin{remark}\label{rem:shearersharpness}
The computations above also yield sharpness up to a $2+o(1)$ factor for Shearer's Theorem.
This follows by noting that for $\lambda=1$ and $d\ge 2$ the expression~\eqref{eq:exponent} has a root at
\[\frac12 d=   \frac{1}{\eta}\log\frac{e}{\eta}, \quad \text{which gives $\eta = \dfrac{2W(e d/2)}{d} = (1+o(1))\frac{2\log d}{d}$\quad as $d\to\infty$}.\]
Writing this last expression as $\eta_d$, the expected number of independent sets of size
$(\eta_d +  \varepsilon)n$
in~$\mathbf{G}$ tends to $0$ as $n\to\infty$.
So, by Markov's inequality,
$\Pr\bigl(\alpha(\mathbf{G}) \ge \left(\eta_d +  \varepsilon\right)n\bigr) \to 0$ as $n\to\infty$.
\end{remark}

\begin{remark}\label{rem:shearersupremacy}
The maximisation in the proof of Proposition~\ref{prop:construction} shows that the normalised size of an independent set (as a proportion of $n$) that contributes the most to $\icount(\mathbf{G})$ is around
\[ \frac{W(d)}{d} = \frac{\log d-\log\log d+o(1)}{d}\quad \text{as $d\to\infty$}.\]
However, the normalised independent set size promised by Shearer's Theorem in its sharpest form~\cite[Thm.~1]{She83} is
\[f(d) = \frac{d\log d - d+1}{(d-1)^2}=\frac{\log d-1+o(1)}{d}\quad \text{as $d\to\infty$},\]
where $f$ has been chosen to satisfy~\eqref{eq:shearer} with equality.
This comparison indicates that, for all large enough $d$, any lower bound on the average independent set size can \emph{never} match  Shearer's bound on the independence number.
\end{remark}

\section{Conclusion}\label{sec:conclusion}

\noindent
Shearer's Theorem has long represented a benchmark in our understanding of the independence number of triangle-free graphs and the Ramsey numbers $R(3,k)$.
We strengthened Shearer's original proof method to help us understand the number of independent sets in triangle-free graphs, and proved a lower bound that is matched to extraordinary precision by a classic probabilistic construction.

Although in a sense the proof method we used is old, our work succeeds recent advances using the local occupancy method (see~\cite{DJPR17,DJPR18,DKPS20+,DaKa25+}).
Those results matched Shearer's result in the first-order asymptotics via the occupancy fraction in the hard-core model.
Even while we were able to circumvent this route, via a different bound on the independence polynomial that implies Shearer's Theorem in its sharpest form (see Remark~\ref{rem:shearer}), we propose the following conjecture about occupancy fraction.
This would strengthen Theorem~\ref{thm:main} essentially, by integration, and~\cite[Thm.~1]{DJPR18}, and it is related to~\cite[Conj.~A]{DaKa25+}.

\begin{conjecture}\label{conj:occfrac}\mbox{}\\*
If $G=(V,E)$ is a  triangle-free graph of average degree $d$, then
\[\frac{Z_G'(1) }{Z_G(1)} \ge (1+o(1))\frac{\log d}{d} |V|\quad \text{as $d\to\infty$}.\]
\end{conjecture}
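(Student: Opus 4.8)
The plan is to develop the local-occupancy method of~\cite{DJPR18} into an average-degree statement with the sharp leading constant. Let $X$ be a uniformly random element of $\mathcal{I}(G)$, so that $Z_G'(1)/Z_G(1) = \mathbb{E}|X| = \sum_{v\in V}\mathbb{P}[v\in X]$. Fix $v$ and condition on $X\cap(V\setminus N[v])$. Triangle-freeness makes $N(v)$ independent, so $N[v]$ induces a star; writing $S_v\subseteq N(v)$ for the (random) set of those $u\in N(v)$ with no neighbour in $X\cap(V\setminus N[v])$, the conditional law of $X$ on $N[v]$ is the hard-core model on the star with centre $v$ and leaf set $S_v$. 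Evaluating in that star at fugacity $1$ gives $\mathbb{P}[v\in X] = \mathbb{E}\bigl[(1+2^{|S_v|})^{-1}\bigr]$ (so in particular $\mathbb{P}[N(v)\cap X=\varnothing] = 2\,\mathbb{P}[v\in X]$). Everything thus reduces to showing $\tfrac1n\sum_{v\in V}\mathbb{E}\bigl[(1+2^{|S_v|})^{-1}\bigr] \ge (1+o(1))\tfrac{\log d}{d}$, i.e.\ that for a typical $v$ few of its neighbours are ``unblocked''.

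This target is tight for $\mathbf{G}\sim G(n,d/n)$, where $|S_v|$ behaves like a Poisson variable of mean $\approx 2W(d)$, so $\mathbb{E}[(1+2^{|S_v|})^{-1}]\approx\mathbb{E}[2^{-|S_v|}]\approx e^{-W(d)}=W(d)/d$; this dictates three steps. (i) As $|S_v|$ is typically large, $(1+2^{|S_v|})^{-1}$ is essentially $2^{-|S_v|}$, so it is enough to bound $\mathbb{E}[2^{-|S_v|}]$ from below (the few vertices for which $|S_v|$ is not typically large are favourable, as then $\mathbb{P}[v\in X]$ is itself bounded below by an absolute constant). (ii) Writing $|S_v|=\sum_{u\in N(v)}\mathbf{1}[u\text{ unblocked}]$, one wants $\mathbb{E}[2^{-|S_v|}]\ge e^{-(1+o(1))\frac12\mathbb{E}|S_v|}$. (iii) One relates $\mathbb{E}|S_v|=\sum_{u\in N(v)}\mathbb{P}[u\text{ unblocked}]$ to occupancy fractions via $\mathbb{P}[u\text{ unblocked}]=2\,\mathbb{P}[u\in X]+(\text{correction})$, the correction coming from the event $\{v\in X\}$ and being lower-order precisely because triangle-freeness forces $N(u)\cap N(v)=\varnothing$; hence $\mathbb{E}|S_v|\approx 2\sum_{u\in N(v)}\mathbb{P}[u\in X]$. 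Combining (i)--(iii) and applying Jensen's inequality to the convex function $x\mapsto e^{-x/2}$ when averaging over $v$ gives, for $\alpha:=\tfrac1n\sum_u\mathbb{P}[u\in X]$, an inequality of the shape $\alpha\ge c\cdot\exp\bigl(-(1+o(1))\tfrac1n\sum_u\deg(u)\,\mathbb{P}[u\in X]\bigr)$ with $c>0$ absolute; in the $d$-regular case the exponent is $-(1+o(1))\,d\alpha$, and solving this self-consistent inequality yields $\alpha\ge(1-o(1))\,W(d)/d=(1+o(1))\tfrac{\log d}{d}$, as wanted.

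Two steps are substantive rather than routine, and I expect (ii) to be the main obstacle. The hard-core model is not FKG --- on a $4$-cycle the events ``the opposite vertex is isolated'' are \emph{positively} correlated --- so one cannot simply dominate $\mathbb{E}[2^{-|S_v|}]$ by $\prod_{u\in N(v)}\mathbb{E}[2^{-\mathbf{1}[u\text{ unblocked}]}]$; one must instead argue that whatever dependence arises among the ``unblocked'' events can only help (as happens for, e.g., $K_{d,d}$, where $|S_v|$ is bimodal and $\mathbb{E}[2^{-|S_v|}]$ is large), plausibly via a revelation/ordering argument, or by showing that a failure of (ii) would itself force $\mathbb{P}[v\in X]$ to be large. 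The second substantive step is the passage from maximum to average degree: one needs $\tfrac1n\sum_u\deg(u)\,\mathbb{P}[u\in X]\le(1+o(1))\,d\,\alpha$, morally true since high-degree vertices are occupied with small probability, but requiring a genuine argument --- a Chebyshev-type sum inequality, say, or reintroducing the Cauchy--Schwarz bound $\tfrac1n\sum_u\deg(u)^2\ge d^2$ that already drives the average-degree bookkeeping in Lemma~\ref{lem:mainlambda}. A less promising alternative is to imitate Shearer's induction directly on $r_G:=Z_G'(1)/Z_G(1)$ via the exact identity $r_G=(1-p_v)\,r_{G-v}+p_v\,r_{G-N[v]}+p_v$ with $p_v=\mathbb{P}[v\in X]$, but here the weights $p_v$ are out of reach --- triangle-free graphs of prescribed average degree admit no useful upper bound on $Z_G(1)$ --- and are correlated with the $r_{G-v},r_{G-N[v]}$ beside them, so the average over $v$ does not decouple as it does in Lemma~\ref{lem:mainlambda}; closing that route seems to demand a simultaneous induction on the pair $(\log Z_G,\,r_G)$ governed by a two-variable ``good function''. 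Either approach, carried through, should also yield the $\lambda$-family of occupancy bounds needed to recover Theorem~\ref{thm:main} by integration.
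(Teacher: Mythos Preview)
The statement is Conjecture~\ref{conj:occfrac}: the paper poses it as an open problem and offers no proof, so there is nothing to compare against. Your write-up is a research plan, not a proof, and you yourself flag the two substantive steps as unresolved.

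On step~(ii), the inequality $\mathbb{E}\bigl[2^{-|S_v|}\bigr]\ge e^{-(1+o(1))\mathbb{E}|S_v|/2}$ is false in general, even for independent indicators: if $|S_v|=\sum_u \mathbf{1}_u$ with independent Bernoulli$(p_u)$ summands, then $\mathbb{E}\bigl[2^{-|S_v|}\bigr]=\prod_u(1-p_u/2)\le e^{-\mathbb{E}|S_v|/2}$, the inequality going the wrong way, with near-equality only when every $p_u=o(1)$. Your Poisson heuristic lives exactly in that small-$p_u$ regime, but smallness of $p_u=\mathbb{P}[u\text{ unblocked}]$ is not available a priori --- a neighbour $u$ of $v$ whose only neighbour is $v$ has $p_u=1$, and low-degree neighbours generally have $p_u$ bounded away from zero. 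So (ii) would need either a structural reduction eliminating such neighbours for most $v$, or a different functional inequality altogether; your fallback (``failure of (ii) forces $\mathbb{P}[v\in X]$ large'') has no mechanism attached. On the average-versus-maximum step, the target $\tfrac1n\sum_u\deg(u)\,\mathbb{P}[u\in X]\le(1+o(1))\,d\alpha$ is a correlation statement between $\deg(\cdot)$ and $\mathbb{P}[\cdot\in X]$ that neither a Chebyshev sum inequality (there is no monotone ordering) nor the Cauchy--Schwarz bound $\tfrac1n\sum_u\deg(u)^2\ge d^2$ (which controls the wrong quantity) addresses; this is precisely the gap between \cite[Thm.~1]{DJPR18} and the conjecture, and nothing in your sketch closes it. The authors presumably left this as a conjecture for the same reasons.
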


\medskip
\noindent
Considering the derivative of the right-hand side in inequality~\eqref{eq:lowbound} (see~\cite[Eq.~(1)]{DJPR18}), we suspect the following more general and precise restatement of Conjecture~\ref{conj:occfrac} could be true.

\begin{mainconjredux}\mbox{}\\*
\textit{Let $\lambda\in(0,1]$.
If $G=(V,E)$ is a  triangle-free graph of average degree $d$, then
\[\frac{\lambda Z_G'(\lambda) }{Z_G(\lambda)} \ge \frac{W(\lambda d)-W(2\lambda)}{d-2} |V|.\]}
\end{mainconjredux}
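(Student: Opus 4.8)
The quantity $\lambda Z_G'(\lambda)/Z_G(\lambda)$ is the \emph{occupancy} $\mathbb{E}[|X|]=\sum_{v\in V}\Pr(v\in X)$ of the hard-core model at fugacity $\lambda$, and the bound in the statement is exactly what one obtains by applying the operator $\lambda\,\partial_\lambda$ to the exponent in Theorem~\ref{thm:mainlambda}: since $\partial_\lambda W(\lambda d)=\frac{W(\lambda d)}{\lambda(1+W(\lambda d))}$, a short computation gives
\[\lambda\,\partial_\lambda\!\left(\frac{W(\lambda d)^2+2W(\lambda d)-W(2\lambda)^2-2W(2\lambda)}{2(d-2)}\right)=\frac{W(\lambda d)-W(2\lambda)}{d-2}=:\psi_\lambda(d).\]
One cannot differentiate an inequality, so this is only a heuristic, which is why the bound is merely conjectured. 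The plan is to prove it directly by a Shearer-style induction on $|V|$, parallel to the proof of Lemma~\ref{lem:mainlambda}. Write $R_G(\lambda)=\lambda Z_G'(\lambda)/Z_G(\lambda)$ and $p_v=\Pr(v\in X)=\lambda Z_{G-N[v]}(\lambda)/Z_G(\lambda)$. Differentiating the identity $Z_G(\lambda)=Z_{G-v}(\lambda)+\lambda Z_{G-N[v]}(\lambda)$ in $\lambda$ yields the conditioning recursion
\[R_G(\lambda)=(1-p_v)\,R_{G-v}(\lambda)+p_v\bigl(1+R_{G-N[v]}(\lambda)\bigr)\qquad\text{for every }v\in V,\]
and since $\sum_{v\in V}p_v=\mathbb{E}[|X|]=R_G(\lambda)$, summing this over all $v$ and cancelling gives
\[(|V|-1)\,R_G(\lambda)=\sum_{v\in V}(1-p_v)\,R_{G-v}(\lambda)+\sum_{v\in V}p_v\,R_{G-N[v]}(\lambda).\]

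One would take $\psi_\lambda(d)=\frac{W(\lambda d)-W(2\lambda)}{d-2}$ as the target function; the choice removes the singularity at $d=2$, so $\psi_\lambda$ is analytic on $\mathbb{R}_{\geq 0}$, with $\psi_\lambda(0)=\tfrac12 W(2\lambda)$ and $\psi_\lambda(d)\to0$ as $d\to\infty$. As in Lemma~\ref{lem:holygrail}, one first checks that $\psi_\lambda$ is non-negative, non-increasing and convex --- each claim reducing, in the manner of that lemma, to showing that an auxiliary function built from $W(\lambda d)$ has a unique sign change at $d=2$. With that in hand, the inductive step mirrors the proof of Lemma~\ref{lem:mainlambda}: feed the inductive bounds $R_{G-v}(\lambda)\ge\psi_\lambda(d_v^{\mathrm{out}})\,n_v^{\mathrm{out}}$ and $R_{G-N[v]}(\lambda)\ge\psi_\lambda(d_v^{\mathrm{in}})\,n_v^{\mathrm{in}}$ into the displayed recursion, apply the tangent inequality $\psi_\lambda(d_v^{\sigma})\ge\psi_\lambda(d)+(d_v^{\sigma}-d)\psi_\lambda'(d)$ from convexity, and carry out, with the quantities $n_{\mathbf v}^{\sigma},m_{\mathbf v}^{\sigma}$ as in that proof, the same triangle-free bookkeeping for a uniformly random vertex $\mathbf v$ (so $\mathbb{E}[|V|-n_{\mathbf v}^{\mathrm{in}}]=d+1$, $\mathbb{E}[|E|-m_{\mathbf v}^{\mathrm{in}}]\ge d^2$, and so on). The hope is that this collapses to a differential inequality for $\psi_\lambda$, the occupancy analogue of~\eqref{eq:lambdaineq}, which $\psi_\lambda$ --- having been manufactured by the heuristic above --- should satisfy with equality.

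The crux of the difficulty is that, unlike the \emph{fixed constant} $\lambda$ multiplying $Z_{G-N[v]}(\lambda)$ in Lemma~\ref{lem:mainlambda}, the weights $p_v$ are partition-function ratios depending on all of $G$. When the recursion is expanded using triangle-freeness, the resulting lower bound on $(|V|-1)R_G(\lambda)$ involves not only $\psi_\lambda(d)$, $\psi_\lambda'(d)$, $|V|$ and $R_G(\lambda)$ but also the weighted degree sums $\sum_{v\in V}p_v\deg v$ and $\sum_{uw\in E}(p_u\deg w+p_w\deg u)$, which are not determined by $d$ and $|V|$ alone, and averaging over a uniformly random vertex does not decouple them from the degree sequence. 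The elementary estimates one has --- for example, $\sum_{v\in V}p_v\deg v$ equals the expected number of edges of $G$ with exactly one endpoint in $X$, and so is at most $|E|$ --- are far too lossy to close the induction for irregular graphs. Overcoming this seems to demand genuine extra input: a \emph{simultaneous} induction that also transports the bound of Theorem~\ref{thm:mainlambda} on $\log Z_{G-N[v]}(\lambda)$ and $\log Z_G(\lambda)$, so as to sandwich $p_v=\lambda\exp\bigl(\log Z_{G-N[v]}(\lambda)-\log Z_G(\lambda)\bigr)$ tightly enough; or an infusion of the local-occupancy and linear-programming technology of Davies, Jenssen, Perkins and Roberts~\cite{DJPR18}, which is built precisely to control such vertex marginals (albeit in the maximum-degree regime); or a cleverer potential in place of $R_G(\lambda)$ that linearises the recursion. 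Even granting such control, identifying the exact $\psi_\lambda$ and verifying that the associated differential inequality holds with equality would be a calculus exercise no less delicate than the one carried out for Lemma~\ref{lem:holygrail}.
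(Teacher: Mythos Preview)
The statement you were asked to prove is labelled \emph{Conjecture~\ref{conj:occfrac} redux} in the paper: it is explicitly an open problem, and the paper offers no proof. There is therefore nothing to compare your attempt against. Your write-up is honest about this --- you call the derivation a heuristic, lay out the natural Shearer-style induction, and then correctly diagnose where it breaks: the weights $p_v$ in the recursion for $R_G(\lambda)$ are global partition-function ratios rather than the fixed constant $\lambda$ that makes Lemma~\ref{lem:mainlambda} go through, and the resulting weighted degree sums cannot be controlled by $d$ and $|V|$ alone.

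That diagnosis is sound, and your suggested remedies (a simultaneous induction using Theorem~\ref{thm:mainlambda} to pin down $p_v$, or importing the local-occupancy linear-programming machinery of~\cite{DJPR18}) are sensible directions. But what you have written is an outline of an approach together with an explanation of why it does not close --- not a proof. If the task was to supply a proof, none has been given; if it was to assess the status of the statement, you have done so accurately.
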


\medskip
\noindent
By the probabilistic method, Conjecture~\ref{conj:occfrac} implies Shearer's Theorem in the form stated in the introduction, though not in its sharpest form (see Remark~\ref{rem:shearersupremacy}).
If one is unconcerned with the asymptotic leading constant, let alone further terms, note that Theorem~\ref{thm:main} already implies an $\Omega\left(\frac{\log d}{d} |V|\right)$ bound for $Z_G'(1)/Z_G(1)$ (see~\cite[Lem.~19]{DKPS20+}).

\medskip
The proof of Theorem~\ref{thm:mainlambda} --- more specifically, the induction in Lemma~\ref{lem:mainlambda} --- hinges on the relation $Z_G(\lambda) = Z_{G-v}(\lambda) + \lambda Z_{G-N[v]}(\lambda)$, and as such is a rather local method.
One might wonder if this method could be generalised, with some analytic vim, just as the local occupancy method has been adapted to many important classes of locally sparse graphs (see~\cite{DaKa25+,DKPS20+,DJKP21}).

\subsection*{Acknowledgements}\mbox{}\\*
Pjotr Buys was supported by grant OCENW.M20.009 of the Dutch Research Council (NWO). Ross Kang was partially supported by grant OCENW.M20.009 of NWO and the Gravitation Programme NETWORKS (024.002.003) of the Dutch Ministry of Education, Culture and Science (OCW).

The research for this paper was done while Jan van den Heuvel was a visitor at the Korteweg--de Vries Institute. Jan would like to thank the institute, and in particular the members of the Discrete Mathematics \& Quantum Information group for their hospitality and the friendly and inspiring atmosphere; and for the cookies, of course.

We are grateful to Ewan Davies, Matthew Jenssen, Will Perkins, and Guus Regts for helpful comments on an earlier version of this manuscript.

\subsection*{Open access statement}\mbox{}\\*
For the purpose of open access, a CC BY public copyright license is applied to any Author Accepted Manuscript (AAM) arising from this submission.

\bibliographystyle{abbrvnat}
\bibliography{supershearer}

\end{document}